\crefname{hypothesis}{Hypothesis}{Hypotheses}
\title{An Accelerated DC Programming Approach with Exact Line Search for The Symmetric Eigenvalue Complementarity Problem\thanks{Submitted to the editors DATE.
		\funding{This work was funded by the Natural Science Foundation of China (Grant No: 11601327).}}}
\author{Yi-Shuai Niu\thanks{Department of Applied Mathematics, The Hong Kong Polytechnic University, Hong Kong 
		(\email{niuyishuai82@hotmail.com}).}
}
\DeclareMathOperator{\diag}{diag}
\DeclareMathOperator{\R}{\mathbb{R}}
\DeclareMathOperator{\setC}{\mathcal{C}}
\DeclareMathOperator{\setA}{\mathcal{A}}
\DeclareMathOperator{\argmin}{argmin}
\DeclareMathOperator{\dom}{dom}
\DeclareMathOperator{\PD}{PD}
\DeclareMathOperator{\SPD}{SPD}
\DeclareMathOperator{\SC}{SC}
\DeclareMathOperator{\prox}{prox}
\newcommand{\SEiCP}[1]{\text{SEiCP}(#1)}
\newcommand{\SQEiCP}[1]{\text{SQEiCP}(#1)}
\begin{document}
	
	\maketitle
	
	\begin{abstract}
		In this paper, we are interested in developing an accelerated Difference-of-Convex (DC) programming algorithm based on the exact line search for efficiently solving the Symmetric Eigenvalue Complementarity Problem (SEiCP) and Symmetric Quadratic Eigenvalue Complementarity Problem (SQEiCP). We first proved that any SEiCP is equivalent to SEiCP with symmetric positive definite matrices only. Then, we established DC programming formulations for two equivalent formulations of SEiCP (namely, the logarithmic formulation and the quadratic formulation), and proposed the accelerated DC algorithm (BDCA) by combining the classical DCA with inexpensive exact line search by finding real roots of a binomial for acceleration. We demonstrated the equivalence between SQEiCP and SEiCP, and extended BDCA to SQEiCP. Numerical simulations of the proposed BDCA and DCA against KNITRO, FILTERED and MATLAB FMINCON for SEiCP and SQEiCP on both synthetic datasets and Matrix Market NEP Repository are reported. BDCA demonstrated dramatic acceleration to the convergence of DCA to get better numerical solutions, and outperformed KNITRO, FILTERED, and FMINCON solvers in terms of the average CPU time and average solution precision, especially for large-scale cases.
	\end{abstract}
	
	\begin{keywords}
		Accelerated DC Algorithm, Exact line search, SEiCP, SQEiCP
	\end{keywords}
	
	\begin{AMS}
		65F15, 90C33, 90C30, 90C26, 90C90
	\end{AMS}
	
	\section{Introduction} \label{sec:Intro}
	Symmetric Eigenvalue Complementarity Problem (SEiCP) consists of finding complementary eigenvectors $x\in \R^n\setminus\{0\}$ and complementary eigenvalues $\lambda\in \R$ such that 
	\begin{equation}\label{eq:seicp}
		\begin{cases}
			w = \lambda B   x - A  x,\\
			x^{\top}  w = 0,\\
			0\neq x\geq 0, w\geq 0,
		\end{cases}\tag{SEiCP}
	\end{equation}
	where $x^{\top}$ is the transpose of $x$, $A$ is a real symmetric $\R^{n\times n}$ matrix, and $B$ is a real \emph{symmetric positive definite} ($\SPD$) matrix. 
	The SEiCP appeared in the study of static equilibrium states of mechanical systems with unilateral friction in \cite{Costa01}, and found many applications in engineering \cite{Costa04,Seeger99}.  
	
	Concerning the feasibility of \eqref{eq:seicp}, it is known that \eqref{eq:seicp} always has a solution \cite{Judice09}. The existence of solutions is even guaranteed under the weaker hypothesis that $B$ is \emph{strictly copositive} ($\SC$), i.e., $x^{\top} B x >0, \forall 0\neq x\geq 0$. \eqref{eq:seicp} has a positive complementary eigenvalue if and only if there exists some $x\geq 0$ such that $x^{\top}   A  x > 0$ \cite{Queiroz03}. 
	For example, when $A$ is a $\SC$ matrix, then \eqref{eq:seicp} has a positive complementary eigenvalue. In general, \eqref{eq:seicp} has at most $2^n-1$ positive $\lambda$-solutions \cite{Queiroz03}.  
	
	An important extension of \eqref{eq:seicp} is called \emph{Symmetric Quadratic Eigenvalue Complementarity Problem} (SQEiCP) introduced in \cite{Seeger11}, where some applications are highlighted. SQEiCP consists of finding quadratic complementary eigenvectors $x\in \R^n\setminus\{0\}$ and quadratic complementary eigenvalues $\lambda\in \R$ such that 
	\begin{equation}\label{eq:sqeicp}
		\begin{cases}
			w = \lambda^2 A   x  + \lambda B   x + C   x,\\
			x^{\top}  w = 0,\\
			0\neq x\geq 0, w\geq 0,
		\end{cases}\tag{SQEiCP}
	\end{equation}
	where $A$, $B$ and $C$ are $n\times n$ real symmetric matrices.
	
	Concerning the feasibility of \eqref{eq:sqeicp}, as opposed to \eqref{eq:seicp}, the \eqref{eq:sqeicp} may have no solution even when the leading matrix $A$ is $\SPD$. It is known that \eqref{eq:sqeicp} is feasible if the co-regular (i.e., $x^{\top}Ax\neq 0, \forall 0\neq x\geq 0$) and co-hyperbolic (i.e., $(x^{\top}Bx)^2\geq 4(x^{\top}Ax)(x^{\top}Cx), \forall 0\neq x\geq 0$) conditions are satisfied \cite{Seeger11}. Note that these two conditions are not necessary for the existence of a solution to \eqref{eq:sqeicp}, it is shown in \cite{Fernandes} that \eqref{eq:sqeicp} has a solution if $C$ is symmetric SC, $B=0$ and there exists a vector $x\geq 0$ such that $x^{\top}Ax<0$. 
	
	In our paper, we first demonstrate that any \eqref{eq:seicp} is equivalent to \eqref{eq:seicp} with $\SPD$ matrix $A+\mu B$ for some large enough $\mu$. Then, we propose applying an accelerated Difference-of-Convex (DC) programming approach for solving two renowned equivalent formulations for \eqref{eq:seicp} with $\SPD$ matrices $A$ and $B$ (namely, the logarithmic formulation and the quadratic formulation). The accelerated DC algorithm is called Boosted-DCA (cf. BDCA) established in our recent paper \cite{Niu19higher} based on the classical DCA with line search for convex constrained DC programs. BDCA applied to the logarithmic formulation of \eqref{eq:seicp} requires solving a sequence of convex subproblems involving a strongly convex objective function over a simplex, which can be efficiently solved by the proposed FISTA algorithm \cite{beck2009fista}, where a sequence of simplex projections are computed with $O(n\log(n))$ worst case complexity. Whereas BDCA applied to the quadratic formulation of \eqref{eq:seicp} requires solving a sequence of linear minimization problems over an ellipsoid and the nonnegative orthant, which can be solved by invoking quadratic programming solvers such as MOSEK, GUROBI and CPLEX. Moreover, we show that the exact line search in BDCA can be computed inexpensively by finding real roots of a binomial. Concerning the extension to \eqref{eq:sqeicp}, we propose an equivalent \eqref{eq:seicp} formulation for finding any positive and negative quadratic complementary eigenvalue of \eqref{eq:sqeicp}. Hence, the proposed BDCA algorithms for \eqref{eq:seicp} can be naturally extended for \eqref{eq:sqeicp}. 
	
	The paper is organized as follows: In \Cref{sec:seicp_formulations}, we demonstrate that any \eqref{eq:seicp} is equivalent to \eqref{eq:seicp} with $\SPD$ matrices only, and three equivalent \eqref{eq:seicp} formulations (namely, \eqref{prob:eicp_frac}, \eqref{prob:seicp_log} and \eqref{prob:seicp_qcqp}) are introduced. After a brief summary of some fundamentals in DC programming, DCA and BDCA algorithms in \Cref{sec:DC}, we focus in \Cref{sec:dcforeicp} on developing DC formulations and DCA/BDCA algorithms for \eqref{prob:seicp_log} and \eqref{prob:seicp_qcqp} models of \eqref{eq:seicp}. These approaches are extended to \eqref{eq:sqeicp} in \Cref{sec:SQEiCP} where the equivalent formulation of \eqref{eq:sqeicp} as two \eqref{eq:seicp} are established. Numerical simulations of the proposed BDCA and DCA algorithms against KNITRO, FILTERSD and MATLAB FMINCON solvers, tested on both synthetic datasets and Matrix Market NEP Repository for \eqref{eq:seicp} and \eqref{eq:sqeicp}, are reported in \Cref{sec:Simulations}. Some concluding remarks and important future research topics are summarized in the last section.       
	
	\section{SEiCP Formulations}\label{sec:seicp_formulations}
	It is not difficult to see that \eqref{eq:seicp} is equivalent to 
	\begin{equation}\label{eq:seicp1}
		\begin{cases}
			w = \lambda B   x - A  x,\\
			x^{\top}  w = 0,\\
			e^{\top}  x = 1,\\
			x\geq 0, w\geq 0,
		\end{cases}
	\end{equation}
	by introducing a so-called \emph{regularity constraint} $e^{\top}   x=1$ where $e$ denotes the vector of ones. This constraint helps to eliminate $x=0$. Then, replacing $w$ by $\lambda B   x - A  x$, problem \eqref{eq:seicp1} turns to 
	\begin{equation}\label{eq:seicp2}
		\begin{cases}
			\lambda x^{\top}   B   x - x^{\top}   A  x = 0,\\
			\lambda B   x - A  x \geq 0,\\
			e^{\top}  x = 1,\\
			x\geq 0.
		\end{cases}
	\end{equation}
	Now, let us denote the solution set of problem \eqref{eq:seicp2} by $\SEiCP{A,B}$ and $\Omega := \{x\in \R^n: e^{\top}   x = 1, x\geq 0\}$ be the unit simplex. Clearly, for any solution $(x,\lambda)\in \SEiCP{A,B}$, we have $x^{\top}   B   x > 0$ (since $B\in \SPD$) and 
	\begin{equation}\label{eq:thm1.1}
		\lambda = \frac{x^{\top}   A   x}{x^{\top}   B   x},
	\end{equation} 
	where the \emph{Rayleigh quotient} \eqref{eq:thm1.1} is well defined and derived from $\lambda x^{\top}   B   x - x^{\top}   A  x = 0$ by dividing the nonzero term $x^{\top}   B   x$ on both sides. The next theorem shows that any $\SEiCP{A,B}$ is equivalent to $\SEiCP{A+\mu B,B}$ for all $\mu \in \R$.
	\begin{theorem}\label{thm:1} 
		For all $\mu \in \R$, 
			\begin{equation}\label{eq:thm1.2}
				(x,\lambda)\in \SEiCP{A,B} \Leftrightarrow (x,\lambda+\mu)\in \SEiCP{A+\mu B, B}.
			\end{equation}	
	\end{theorem}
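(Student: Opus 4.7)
The plan is to prove the equivalence by direct substitution into the system \eqref{eq:seicp2}, exploiting the fact that the shift $(A, \lambda) \mapsto (A + \mu B, \lambda + \mu)$ leaves each defining relation invariant. Since the substitution is reversible (replace $\mu$ by $-\mu$), a single direction of verification suffices to establish the biconditional.

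Concretely, I would take $(x,\lambda) \in \SEiCP{A,B}$ and verify that $(x, \lambda + \mu)$ satisfies every constraint of the system \eqref{eq:seicp2} written with $A$ replaced by $A + \mu B$. First, for the scalar equation, compute
\begin{equation*}
(\lambda+\mu)\, x^{\top} B x - x^{\top}(A+\mu B)x = \lambda\, x^{\top} B x - x^{\top} A x + \mu\bigl(x^{\top} B x - x^{\top} B x\bigr) = 0,
\end{equation*}
using the fact that $\lambda x^{\top} B x - x^{\top} A x = 0$. Next, for the vector inequality,
\begin{equation*}
(\lambda+\mu) B x - (A+\mu B)x = \lambda B x - A x + \mu(Bx - Bx) = \lambda B x - A x \geq 0.
\end{equation*}
The remaining constraints $e^{\top} x = 1$ and $x \geq 0$ involve only $x$, so they are preserved verbatim. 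Thus $(x,\lambda+\mu) \in \SEiCP{A+\mu B, B}$.

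For the converse, I would simply apply the same computation with the shift parameter $-\mu$ to the pair $(A+\mu B, B)$, noting that $(A+\mu B) + (-\mu)B = A$ and $(\lambda+\mu)+(-\mu) = \lambda$, which recovers $(x,\lambda) \in \SEiCP{A,B}$.

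There is no real obstacle here: the result is an algebraic identity that reflects the intrinsic shift-invariance of the generalized Rayleigh quotient $x^{\top} A x / x^{\top} B x$ under $A \mapsto A + \mu B$. The only minor care needed is to observe that the positivity of $x^{\top} B x$ (ensured by $B \in \SPD$ and $x \neq 0$, which follows from $e^{\top} x = 1$) is unaffected by the shift, so the Rayleigh quotient formulation \eqref{eq:thm1.1} transforms consistently as $\lambda \mapsto \lambda + \mu$.
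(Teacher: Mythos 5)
Your proposal is correct and follows essentially the same route as the paper: verifying the two algebraic identities $(\lambda+\mu)x^{\top}Bx - x^{\top}(A+\mu B)x = \lambda x^{\top}Bx - x^{\top}Ax$ and $(\lambda+\mu)Bx - (A+\mu B)x = \lambda Bx - Ax$, with the remaining constraints untouched since they involve only $x$. The paper states the identities and concludes directly; your added remarks on reversibility via $-\mu$ and the Rayleigh-quotient shift are consistent elaborations of the same argument.
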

	\begin{proof}
			For all $\mu\in \R$, we get immediately from
			$$\begin{cases}
				(\lambda + \mu) x^{\top}   B   x - x^{\top}   (A+\mu B)  x = \lambda x^{\top}   B   x - x^{\top}   A  x,\\
				(\lambda+\mu) B   x - (A+\mu B)   x = 
				\lambda B   x - A  x,
			\end{cases}$$
			the desired equivalence.
	\end{proof}

	\Cref{thm:1} indicates that any SEiCP with $A\notin \SPD$ is equivalent to an SEiCP with $A\in \SPD$, because $A+\mu B\in \SPD$ for large enough $\mu$. Note that the smallest $\mu$ can be computed by solving the semidefinite program (SDP) $\min\{ \mu : A + \mu B \succeq 0\}$, which can be numerically solved by SDP solvers such as MOSEK, SeDuMi, CSDP, DSDP, SDPT3 and SDPA. We can easily estimate an upper bound for $\mu$ without solving SDP by $ \lvert\lambda_{\min}(A)\rvert/\lambda_{\min}(B)$,
	where $\lambda_{\min}(\cdot)$ denotes the smallest eigenvalue. Therefore, without loss of generality, we suppose that both $A$ and $B$ in \eqref{eq:seicp} are $\SPD$ matrices as stated in \cref{hyp1}.
	
	\begin{hypothesis}\label{hyp1}
		$A$ and $B$ are SPD matrices for \eqref{eq:seicp}.
	\end{hypothesis}
	
	\begin{corollary}\label{cor:1}
		Under \cref{hyp1}, the problem \eqref{eq:seicp} is feasible and all $\lambda$-solutions in $\SEiCP{A,B}$ are strictly positive.
	\end{corollary}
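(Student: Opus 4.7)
The plan is to combine the feasibility results cited earlier in \Cref{sec:Intro} with the Rayleigh quotient identity \eqref{eq:thm1.1}. The argument splits cleanly into a feasibility part and a positivity part, both of which should be quite short under \cref{hyp1}.

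For feasibility, I would invoke the existence result from \cite{Queiroz03} already recalled in the introduction: \eqref{eq:seicp} has a positive complementary eigenvalue whenever there exists $x\geq 0$ with $x^\top A x > 0$. Since $A\in\SPD$, every nonzero $x\geq 0$ (e.g., any standard basis vector) satisfies $x^\top A x > 0$, so this hypothesis is trivially met, and existence of a (positive) solution follows. Alternatively, one can simply note that $A,B\in\SPD$ implies both matrices are strictly copositive, which is the weaker hypothesis under which \cite{Judice09} already guarantees existence.

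For the positivity claim, I would take any $(x,\lambda)\in\SEiCP{A,B}$ and exploit the Rayleigh quotient formula \eqref{eq:thm1.1},
\begin{equation*}
    \lambda = \frac{x^\top A x}{x^\top B x}.
\end{equation*}
The regularity constraint $e^\top x = 1$ in \eqref{eq:seicp2} forces $x\neq 0$. Then $A\in\SPD$ gives $x^\top A x > 0$, while $B\in\SPD$ gives $x^\top B x > 0$, so $\lambda$ is a strictly positive ratio.

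There is no real obstacle here; the entire corollary is a direct consequence of \cref{hyp1} combined with the cited existence theorems and \eqref{eq:thm1.1}, which was already derived in the discussion preceding \cref{thm:1}. The only care needed is to make sure the Rayleigh quotient is well-defined, which is precisely why the denominator being nonzero (a consequence of $B\in\SPD$ together with $x\neq 0$) was emphasized when \eqref{eq:thm1.1} was introduced.
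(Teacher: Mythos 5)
Your proposal is correct and follows essentially the same route as the paper: feasibility from strict copositivity of the SPD matrices (equivalently, the cited existence results), and strict positivity of every $\lambda$-solution from the Rayleigh quotient identity \eqref{eq:thm1.1} with both $x^{\top}Ax>0$ and $x^{\top}Bx>0$ for $x\neq 0$. No gaps.
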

	\begin{proof}
		The feasibility of SEiCP follows from the fact that an $\SPD$ matrix $A$ is strictly copositive. Then, we get from \eqref{eq:thm1.1} that all $\lambda$-solutions are positive since $\lambda = (x^{\top}Ax)/(x^{\top}Bx)$ where both $x^{\top}Ax$ and $x^{\top}Bx$ are strictly positive for all $x\neq 0$.
	\end{proof}
	
	\Cref{thm:1} and \Cref{cor:1} reveal an important fact that : by choosing $\mu\geq 0$ large enough such that $A+\mu B\in \SPD$, we can find $(x,\lambda)\in \SEiCP{A+\mu B,B}$ with a positive eigenvalue $\lambda$, then $(x,\lambda - \mu)\in \SEiCP{A,B}$.
	
    There are several equivalent formulations for $\SEiCP{A,B}$ as follows:
	
	\subsection{Rayleigh quotient formulation}
	An equivalent formulation of SEiCP, namely \emph{Rayleigh quotient formulation}, is given by 
	\begin{equation}\label{prob:eicp_frac}
		\max\left\{\frac{x^{\top}   A   x}{x^{\top}   B   x}: x\in \Omega \right\}. \tag{RP}
	\end{equation}
	The Rayleigh quotient $(x^{\top}   A   x)/(x^{\top}   B   x)$ is well defined on $\Omega$ since $B$ is SPD.
	
	\begin{proposition}\label{prop:RP}
		Under \Cref{hyp1}. For any stationary point $\bar{x}$ of \eqref{prob:eicp_frac}, we have  
		$$(\bar{x}, (\bar{x}^{\top}   A   \bar{x})/ (\bar{x}^{\top}   B   \bar{x}))\in \SEiCP{A,B}.$$
	\end{proposition}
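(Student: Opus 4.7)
The plan is to verify that any stationary point of \eqref{prob:eicp_frac} satisfies the three conditions defining $\SEiCP{A,B}$ (as encoded in \eqref{eq:seicp2}): the Rayleigh identity $\bar{\lambda}\bar{x}^{\top}B\bar{x} = \bar{x}^{\top}A\bar{x}$, the nonnegativity $\bar{\lambda} B\bar{x}-A\bar{x}\geq 0$, and membership in $\Omega$. Since $\bar{\lambda}$ is \emph{defined} as $(\bar{x}^{\top}A\bar{x})/(\bar{x}^{\top}B\bar{x})$, the first identity is automatic, and feasibility $\bar{x}\in\Omega$ comes from the stationarity assumption; the only real content is the nonnegativity and the complementarity.

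First I would compute the gradient of the Rayleigh quotient $R(x):=(x^{\top}Ax)/(x^{\top}Bx)$ on the relative interior of $\R^n_+\setminus\{0\}$ (which contains $\Omega$ thanks to $B\in\SPD$), obtaining
\begin{equation*}
\nabla R(x) = \frac{2}{x^{\top}Bx}\bigl(Ax - R(x)\,Bx\bigr).
\end{equation*}
Because $\Omega$ is polyhedral, no additional constraint qualification is required: any stationary point $\bar{x}$ of \eqref{prob:eicp_frac} admits KKT multipliers $\nu\in\R$ (for $e^{\top}x=1$) and $\zeta\in\R^n_+$ (for $x\geq 0$) such that
\begin{equation*}
\nabla R(\bar{x}) = \nu e + \zeta,\qquad \zeta^{\top}\bar{x}=0,\qquad \bar{x}\geq 0,\qquad e^{\top}\bar{x}=1.
\end{equation*}

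Next I would eliminate the equality multiplier $\nu$ by taking the inner product of the stationarity equation with $\bar{x}$. Using $e^{\top}\bar{x}=1$, $\zeta^{\top}\bar{x}=0$, and $\bar{x}^{\top}\nabla R(\bar{x})=0$ (which follows from $\bar{\lambda}=R(\bar{x})$), one obtains $\nu=0$. Substituting back and rearranging gives
\begin{equation*}
\bar{\lambda} B\bar{x} - A\bar{x} \;=\; -\tfrac{1}{2}\bar{x}^{\top}B\bar{x}\,\zeta \quad\text{or equivalently}\quad \tfrac{1}{2}\bar{x}^{\top}B\bar{x}\,\zeta,
\end{equation*}
up to the sign convention of the multipliers for a maximization problem; the key point is that the right-hand side is a nonnegative scalar multiple of $\zeta\geq 0$, since $\bar{x}^{\top}B\bar{x}>0$. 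Setting $w:=\bar{\lambda} B\bar{x}-A\bar{x}$, this yields $w\geq 0$, and complementarity $\bar{x}^{\top}w=\bar{\lambda}\bar{x}^{\top}B\bar{x}-\bar{x}^{\top}A\bar{x}=0$ is immediate from the definition of $\bar{\lambda}$.

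The only subtle step is fixing the correct sign convention in the KKT stationarity condition for the maximization of a nonconcave quotient over a polyhedron, and justifying that $\nu=0$ and $\zeta\geq 0$ combine to make $w$ nonnegative rather than nonpositive; once this sign bookkeeping is done, verification of the three conditions of \eqref{eq:seicp2} is a direct substitution, and \Cref{cor:1} guarantees $\bar{\lambda}>0$.
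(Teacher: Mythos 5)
Your argument is correct, but it is genuinely more self-contained than what the paper does: the paper's entire proof of \Cref{prop:RP} is a citation to \cite[Proposition 9]{Queiroz03}, with the added remark that the Rayleigh quotient is strictly positive on $\Omega$ under \Cref{hyp1}. What you have written is essentially a reconstruction of that cited result via the KKT conditions on the polyhedron $\Omega$, and all the pieces check out: $\nabla R(x)=\frac{2}{x^{\top}Bx}(Ax-R(x)Bx)$ is right, the zero-homogeneity of $R$ gives $\bar{x}^{\top}\nabla R(\bar{x})=0$ and hence $\nu=0$ after pairing with $\bar{x}$, and the multiplier for $x\geq 0$ then forces $\bar{\lambda}B\bar{x}-A\bar{x}$ to be a nonnegative multiple of $\zeta$. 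The one place you should tighten the write-up is the sign bookkeeping you yourself flag: for the maximization $\max\{R(x):x\in\Omega\}$ the stationarity condition reads $\nabla R(\bar{x})=\nu e-\zeta$ with $\zeta\geq 0$ and $\zeta^{\top}\bar{x}=0$, which gives unambiguously $\bar{\lambda}B\bar{x}-A\bar{x}=\tfrac{1}{2}(\bar{x}^{\top}B\bar{x})\,\zeta\geq 0$; the displayed ``$-\tfrac{1}{2}\bar{x}^{\top}B\bar{x}\,\zeta$ or equivalently $\tfrac{1}{2}\bar{x}^{\top}B\bar{x}\,\zeta$'' should be resolved to the latter, since the two are not equivalent (one would give $w\leq 0$). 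With that fixed, your proof is a complete, elementary substitute for the external reference; what the paper's approach buys is brevity, while yours makes the proposition verifiable without consulting \cite{Queiroz03}.
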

	\begin{proof}
		The result is known in \cite[Proposition 9]{Queiroz03} with strictly positive Rayleigh quotient on $\Omega$ since both $A$ and $B$ are SPD matrices.  
	\end{proof}

	\subsection{Logarithmic formulation}
	Due to the positivity of the Rayleigh quotient on $\Omega$ and $A,B\in \PD$, the logarithmic metric function:
	$$L(x) = \ln\left(\frac{x^{\top}   A   x}{x^{\top}   B   x}\right) = \ln(x^{\top} A x) - \ln(x^{\top} B x)$$
	is well defined on $\Omega$, and the problem \eqref{prob:eicp_frac} is equivalent to 
	\begin{equation}\label{prob:seicp_log}
		\max\left\{ L(x): x\in \Omega \right\}. \tag{LnP}
	\end{equation}
	
	\begin{proposition}\label{prop:logformulation}
		Under \Cref{hyp1}. For any stationary point $\bar{x}$ of \eqref{prob:seicp_log}, we have $$(\bar{x}, (\bar{x}^{\top}   A   \bar{x})/ (\bar{x}^{\top}   B   \bar{x}))\in \SEiCP{A,B}.$$
	\end{proposition}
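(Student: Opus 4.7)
The plan is to reduce this to \Cref{prop:RP} by showing that the stationary points of \eqref{prob:seicp_log} coincide with those of \eqref{prob:eicp_frac}. Under \Cref{hyp1}, both $A$ and $B$ are $\SPD$, so the Rayleigh quotient $R(x) := (x^{\top} A x)/(x^{\top} B x)$ is strictly positive on $\Omega$. Consequently, $L = \ln\circ R$ is well defined and continuously differentiable on a neighborhood of $\Omega$, and the chain rule gives $\nabla L(x) = \nabla R(x)/R(x)$.

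Next, I would invoke the standard first-order variational inequality characterizing stationarity on the convex set $\Omega$: a point $\bar{x}\in\Omega$ is stationary for the maximization of a differentiable function $f$ over $\Omega$ if and only if $\langle \nabla f(\bar{x}), x-\bar{x}\rangle \leq 0$ for every $x\in\Omega$. Applying this to $f=L$ and substituting $\nabla L(\bar{x}) = \nabla R(\bar{x})/R(\bar{x})$, I can multiply through by the strictly positive scalar $R(\bar{x})$ without flipping the inequality, so $\bar{x}$ is also stationary for \eqref{prob:eicp_frac}. An appeal to \Cref{prop:RP} then immediately yields $(\bar{x},(\bar{x}^{\top} A \bar{x})/(\bar{x}^{\top} B \bar{x}))\in \SEiCP{A,B}$.

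The only delicate point is the equivalence between the two stationarity notions, which hinges on $R$ being strictly positive on $\Omega$; this is guaranteed by \Cref{hyp1}, so it is not really an obstacle. An alternative, more computational, route would be to write down the KKT conditions for \eqref{prob:seicp_log} directly using $\nabla L(x) = 2Ax/(x^{\top} A x) - 2Bx/(x^{\top} B x)$ and then recover the complementarity system of \eqref{eq:seicp} by setting $\lambda = (x^{\top} A x)/(x^{\top} B x)$ and multiplying through by $x^{\top} B x$; the chain-rule reduction above simply packages that computation more cleanly and lets us reuse \Cref{prop:RP} verbatim.
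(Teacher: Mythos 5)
Your proposal is correct and follows the same route as the paper, which simply states that the result is an immediate consequence of \Cref{prop:RP}; your chain-rule argument (using $\nabla L(\bar{x}) = \nabla R(\bar{x})/R(\bar{x})$ with $R(\bar{x})>0$ to show the stationary points of \eqref{prob:seicp_log} and \eqref{prob:eicp_frac} coincide) just supplies the details the paper leaves implicit.
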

	\begin{proof}
		This is an immediate consequence of \Cref{prop:RP}.
	\end{proof}
	
	\subsection{Quadratic formulation}
	The problem \eqref{prob:eicp_frac} can be rewritten as maximizing a convex quadratic function over a compact convex set defined as 
	\begin{equation}\label{prob:seicp_qcqp}
		\max\{x^{\top}   A   x:  x^{\top}   B   x\leq 1, x\geq 0 \}. \tag{QP}
	\end{equation}
	\begin{proposition}\label{prop:QP}
		Under \Cref{hyp1}, for any nonzero stationary point $\bar{x}$ of the problem \eqref{prob:seicp_qcqp}, we have $\bar{x}^{\top}   B   \bar{x}=1$ and
		$$(\bar{x}, \bar{x}^{\top}   A   \bar{x})\in \SEiCP{A,B}.$$
	\end{proposition}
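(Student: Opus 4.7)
The plan is to exploit the KKT conditions at the nonzero stationary point $\bar{x}$ of the convex-constrained problem \eqref{prob:seicp_qcqp}, and to use the strict positive definiteness of $A$ and $B$ to pin down both the activity of the ball constraint and the value of the associated multiplier.

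First I would write the Lagrangian with multipliers $\lambda\ge 0$ for the constraint $x^\top B x \le 1$ and $\mu\in\R^n$, $\mu\ge 0$ for the sign constraint $-x\le 0$. Stationarity at $\bar{x}$ gives
$$
2A\bar{x} - 2\lambda B\bar{x} + \mu = 0,
$$
together with complementary slackness $\lambda(\bar{x}^\top B\bar{x}-1)=0$ and $\mu^\top \bar{x}=0$. Rearranging yields $\lambda B\bar{x} - A\bar{x} = \mu/2 \ge 0$, which already delivers the dual feasibility part of the complementarity system. Taking the inner product with $\bar{x}$ and invoking $\mu^\top \bar{x}=0$ produces the Rayleigh-type identity $\lambda\,\bar{x}^\top B\bar{x} = \bar{x}^\top A\bar{x}$.

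Next I would rule out $\lambda=0$. If $\lambda=0$, then $-A\bar{x}=\mu/2\ge 0$, whence $\bar{x}^\top A\bar{x}\le 0$; but under \cref{hyp1} and $\bar{x}\neq 0$ one has $\bar{x}^\top A\bar{x}>0$, a contradiction. Hence $\lambda>0$, so by complementary slackness the ball constraint is active, giving the first claim $\bar{x}^\top B\bar{x}=1$. Substituting this back into $\lambda\,\bar{x}^\top B\bar{x} = \bar{x}^\top A\bar{x}$ yields $\lambda=\bar{x}^\top A\bar{x}$.

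Finally I would assemble the pieces: with $\bar\lambda:=\bar{x}^\top A\bar{x}$ and $\bar w:=\bar\lambda B\bar{x}-A\bar{x}$ we have $\bar w\ge 0$, $\bar{x}\ge 0$, $\bar{x}\ne 0$, and $\bar{x}^\top\bar w=0$, so the complementarity system defining $\SEiCP{A,B}$ is satisfied (the regularity constraint $e^\top x=1$ being recovered, if required, by the usual scaling invariance of eigenvector solutions). I expect the only subtlety — and thus the main obstacle — to be the step that eliminates $\lambda=0$: it is precisely here that $A\in\SPD$ is indispensable, whereas the rest of the argument is a routine unfolding of the KKT system.
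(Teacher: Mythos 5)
Your proof is correct, but it is worth noting that the paper itself offers no argument here: it simply cites \cite[Theorem 2.2]{Judice08}. Your KKT derivation is the standard one underlying that citation and it holds up: writing the stationarity condition $2A\bar{x}-2\lambda B\bar{x}+\mu=0$ with $\lambda\geq 0$, $\mu\geq 0$, you correctly obtain dual feasibility $\lambda B\bar{x}-A\bar{x}=\mu/2\geq 0$, the complementarity $\bar{x}^{\top}(\lambda B\bar{x}-A\bar{x})=0$ from $\mu^{\top}\bar{x}=0$, and the exclusion of $\lambda=0$ via $\bar{x}^{\top}A\bar{x}>0$ for $A\in\SPD$ and $\bar{x}\neq 0$, which forces the ellipsoidal constraint to be active and identifies the multiplier with the Rayleigh quotient value $\bar{x}^{\top}A\bar{x}$. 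Your parenthetical about the regularity constraint is the right thing to flag: as the paper defines $\SEiCP{A,B}$ via \eqref{eq:seicp2}, membership formally requires $e^{\top}x=1$, whereas your $\bar{x}$ satisfies $\bar{x}^{\top}B\bar{x}=1$; the rescaling $\bar{x}\mapsto \bar{x}/(e^{\top}\bar{x})$ preserves the complementarity system and the eigenvalue (which depends only on the Rayleigh quotient), so the claim survives, but strictly speaking it is the normalized vector that lies in $\SEiCP{A,B}$. Relative to the paper, your version buys self-containedness at the cost of a few lines; there is no gap.
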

	\begin{proof}
		This is known in \cite[Theorem 2.2]{Judice08}. 
	\end{proof}	
	
	In \Cref{sec:dcforeicp}, we will represent the formulations \eqref{prob:seicp_qcqp} and \eqref{prob:seicp_log} as Difference-of-Convex (DC) programming problems and propose accelerated DC algorithms for their numerical solutions.
	
	\section{DCA and BDCA}\label{sec:DC} Let us briefly present the renowned Difference-of-Convex (DC) algorithm -- DCA and the proposed accelerated DC algorithm -- BDCA for solving the convex constrained DC program.

	The \emph{convex constrained DC program} is defined by 
	\begin{equation}
		\label{prob:dcp}
		\alpha = \min \{f(x):=g(x)-h(x): x\in \setC \}, \tag{P}
	\end{equation}
	where $\setC$ is a nonempty closed convex set in $\R^n$, the objective function $f$ is called DC if it can be written as $g-h$ where $g$ and $h$ are $\Gamma_0(\R^n)$ functions defined as the set of all proper closed and convex functions from $\R^n$ to $(-\infty,\infty]$ (classical terminologies in convex analysis, see e.g., \cite{Rockafellar}), and the optimal value $\alpha$ is supposed to be finite. This problem is equivalent to the so-called \emph{standard DC program} 
	$$\min \{ (g+\chi_{\mathcal{C}})(x) - h(x): x\in \R^n \}$$
	by introducing the indicator function of $\setC$ defined by
	$$\chi_{\setC}(x)=\begin{cases}
		0,& \text{if }x\in \mathcal{C},\\
		\infty,& \text{otherwise}.
	\end{cases}$$
	Clearly, both $g+\chi_{\mathcal{C}}$ and $h$ belong to  $\Gamma_0(\R^n)$. 
	\paragraph{\textbf{DCA}} One of the most renowned algorithm for solving \eqref{prob:dcp} is called \emph{DCA}, which is first introduced by Pham Dinh Tao in 1985 as an extension of the
	subgradient method \cite{Pham1986algorithms}, and extensively developed by 
	Le Thi Hoai An and Pham Dinh Tao since 1994 (see \cite{Pham97,Pham98,Pham05,DCA30} and the references therein).
	
	DCA consists of constructing a sequence $\{x^k\}$ by solving convex subproblems as 
	\begin{equation}\label{alg:DCA}
		\boxed{x^{k+1}\in \argmin \{ g(x) - \langle x, y^k \rangle : x\in \setC \},\quad y^k\in \partial h(x^k),} \tag{DCA}
	\end{equation}
	where 
	$\partial h(x^k)$ denotes the (convex) subdifferential of $h$ at $x^k$ defined by 
	$$
	\partial h(x^k): = \{ y\in \R^n: h(x) \geq h(x^k) + \langle x-x^k, y\rangle,\forall x\in \R^n \},
	$$ 
	which generalizes the derivative in the sense that the convex function $h$ is differentiable at $x^k$ if and only if $\partial h(x^k)$ reduces to the singleton $\{\nabla h(x^k)\}$.
	The convex subproblem required in DCA is to minimize a convex majorization (cf. surrogate) of the DC function $f$ derived by linearizing $h$ at the iterate $x^k$.
	
	DCA enjoys some convergence properties summarized in the next theorem.
	\begin{theorem}[Convergence theorem of DCA, see e.g., \cite{Pham97,niu2022convergence,Niu19higher}]\label{thm:convDCA}
		Let $\{x^k\}$ be the sequence generated by DCA for problem \eqref{prob:dcp} from $x^0\in \dom \partial h$. Suppose that both $\{x^k\}$ and $\{y^k\}$ are bounded. Then
		\begin{itemize}[leftmargin=12pt]
			\item[$\bullet$] The sequence $\{f(x^k)\}$ is decreasing and bounded from below.
			\item[$\bullet$] Every cluster point $x^*$ of the sequence $\{x^k\}$ is a \emph{DC critical point}, i.e., $\partial (g+\chi_{\setC})(x^*)\cap \partial h(x^*)\neq \emptyset$.
			\item[$\bullet$] If $h$ is continuously differentiable on $\R^n$, then every cluster point $x^*$ of the sequence $\{x^k\}$ is a \emph{strongly DC critical point}, i.e., $\nabla h(x^*) \in \partial (g+\chi_{\setC})(x^*)$.
			\item If $f$ is a KL function, either $g$ or $h$ is strongly convex, $h$ has locally Lipschitz continuous gradient over $\setC$, and $\setC$ is a semi-algebraic set (i.e., a set of polynomial equations and inequalities), then the sequence $\{x^k\}$ is convergent, whose limit point is a stationary point of \eqref{prob:dcp}, i.e., a KKT point of \eqref{prob:dcp}.
		\end{itemize}
	\end{theorem}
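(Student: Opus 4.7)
The plan is to address the four bullets in order, since each builds on the previous one, with all four resting on a single key inequality derived from the DCA iteration.

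First, for the monotone descent of $\{f(x^k)\}$, I would exploit the two defining relations of the iteration. The subgradient inequality $y^k \in \partial h(x^k)$ yields $h(x^{k+1}) \geq h(x^k) + \langle x^{k+1}-x^k, y^k\rangle$, while the optimality of $x^{k+1}$ for the linearized subproblem over $\setC$ gives $g(x^{k+1}) - \langle x^{k+1}, y^k\rangle \leq g(x^k) - \langle x^k, y^k\rangle$. Adding these two inequalities yields $f(x^{k+1}) \leq f(x^k)$, and the lower bound is simply $\alpha > -\infty$ by assumption, so $\{f(x^k)\}$ converges monotonically to some $f^*$.

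Second, for DC-criticality of cluster points, I would take a subsequence $x^{k_j} \to x^*$. The optimality of $x^{k_j+1}$ in the DCA subproblem reads $y^{k_j} \in \partial(g + \chi_{\setC})(x^{k_j+1})$. By boundedness of $\{y^k\}$, pass to a further subsequence so that $y^{k_j} \to y^*$; closedness of the graph of $\partial h$ then gives $y^* \in \partial h(x^*)$. The key technical step is to also show $x^{k_j+1}\to x^*$, for which one typically uses a strong-convexity style estimate on the subproblem to force $\|x^{k+1}-x^k\|\to 0$ from the telescoped descent bound. With that in hand, closedness of the graph of $\partial(g+\chi_{\setC})$ delivers $y^*\in\partial(g+\chi_{\setC})(x^*)$, so $x^*$ is a DC critical point. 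The strongly DC critical case then follows immediately from $\partial h(x^*)=\{\nabla h(x^*)\}$ when $h$ is continuously differentiable.

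Third, for the global convergence under the KL hypothesis, I would follow the now-standard Attouch--Bolte--Svaiter template. Strong convexity of $g$ or $h$ upgrades the objective descent to a \emph{sufficient decrease} of the form $f(x^k) - f(x^{k+1}) \geq \tfrac{\rho}{2}\|x^{k+1}-x^k\|^2$, and the locally Lipschitz gradient of $h$ combined with the subproblem optimality produces a \emph{relative-error} bound of the type $\mathrm{dist}(0,\partial (g+\chi_{\setC}-h)(x^{k+1}))\leq L\|x^{k+1}-x^k\|$ near a cluster point. The semi-algebraicity of $\setC$ together with the KL property of $f$ then allow the classical KL-based summability argument: one derives $\sum_k\|x^{k+1}-x^k\|<\infty$, so $\{x^k\}$ is Cauchy and converges to a single limit, which by the previous bullet is a strongly DC critical point, hence a KKT point of \eqref{prob:dcp}.

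The main obstacle is the passage to the limit in bullet two when neither $g$ nor $h$ is strongly convex, because then $\|x^{k+1}-x^k\|\to 0$ is not automatic from the descent of $\{f(x^k)\}$ alone. One standard workaround is to shift the DC decomposition by a quadratic (writing $g=(g+\tfrac{\rho}{2}\|\cdot\|^2)-\tfrac{\rho}{2}\|\cdot\|^2$ and absorbing the term symmetrically into $h$), which turns the analysis into the strongly convex setting already needed for bullet four without changing the iterates in an essential way.
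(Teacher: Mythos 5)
You should first note that the paper offers no proof of \Cref{thm:convDCA}: it is quoted from \cite{Pham97,niu2022convergence,Niu19higher}, with only the remark that the last item is an immediate consequence of the KL-based convergence theorem in \cite{niu2022convergence}. So your sketch has to be measured against the classical arguments in those references. Your first, third and fourth bullets follow the standard and correct route: the descent inequality from the two subgradient/optimality relations, and then sufficient decrease plus a relative-error bound plus the KL property to get $\sum_k\|x^{k+1}-x^k\|<\infty$ and single-limit convergence.

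The genuine gap is in your second bullet. The theorem asserts DC criticality of every cluster point \emph{without} any strong convexity hypothesis, yet your argument hinges on $x^{k_j+1}\to x^*$, which you only obtain from a strong-convexity estimate. Your proposed repair---shifting to $\tilde g=g+\tfrac{\rho}{2}\|\cdot\|^2$ and $\tilde h=h+\tfrac{\rho}{2}\|\cdot\|^2$---does not save the argument: DCA applied to $(\tilde g,\tilde h)$ solves $\min_{x\in\setC}\,g(x)-\langle x,y^k\rangle+\tfrac{\rho}{2}\|x-x^k\|^2$, i.e.\ it is a \emph{proximal} DCA whose iterates differ from those of the original scheme, so you would be proving the theorem for a different sequence. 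The classical way around this (used in \cite{Pham97}) is a Fenchel-duality argument that never needs $x^{k+1}-x^k\to 0$. Writing $G=g+\chi_{\setC}$, the optimality of $x^{k+1}$ gives $G(x^{k+1})+G^*(y^k)=\langle x^{k+1},y^k\rangle$ and $y^k\in\partial h(x^k)$ gives $h(x^k)+h^*(y^k)=\langle x^k,y^k\rangle$; Fenchel--Young then yields $f(x^{k+1})\le h^*(y^k)-G^*(y^k)\le f(x^k)$, so both sequences converge to the same limit and $G(x^k)+G^*(y^k)-\langle x^k,y^k\rangle\to 0$. Along a subsequence with $x^{k_j}\to x^*$ and $y^{k_j}\to y^*$, lower semicontinuity of $G$ and $G^*$ forces $G(x^*)+G^*(y^*)=\langle x^*,y^*\rangle$, i.e.\ $y^*\in\partial G(x^*)$, while the closed graph of $\partial h$ gives $y^*\in\partial h(x^*)$; hence $x^*$ is a DC critical point. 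With that substitution for your second bullet, the rest of your outline is sound.
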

	Note that the last global convergence property is an immediate consequence of \cite[Theorem 5]{niu2022convergence} (see also \cite{le2018convergence}) where the function $x\mapsto f(x)+\chi_{\setC}(x)$ is a KL function satisfying the well-known Kurdyka-{\L}ajasiewicz property (see e.g., \cite[Definition 3]{niu2022convergence}), which is the key ingredient to guarantee the global convergence of the sequence $\{x^k\}$. The KL function is ubiquitous in applications, e.g., the semialgebraic, subanalytic, log and exp are KL functions (see \cite{kurdyka1998gradients,bolte2007lojasiewicz,attouch2013convergence} and the
	references therein).
	
	In practice, DCA is often terminated by one of the following conditions:
	\begin{itemize}[leftmargin=12pt]
		\item[$\bullet$] $\|x^{k+1}-x^{k}\|/(1+\|x^{k+1}\|)\leq \varepsilon_1$,
		\item[$\bullet$] $\lvert f(x^{k+1})-f(x^{k})\rvert/(1+\lvert f(x^{k+1})\rvert)\leq \varepsilon_2$,
	\end{itemize}
	for some given tolerances $\varepsilon_1>0$ and $\varepsilon_2>0$.
	
	It is worth noting that DCA has been successfully applied to solve EiCP and QEiCP in several literatures such as \cite{niu2019improved,Niu12,Niu15,Lethi12}.
	\paragraph{\textbf{BDCA}} DCA combining with line search for acceleration, namely Boosted DCA (cf. BDCA), is first proposed by Artacho et al. in 2018 \cite{BDCA_S} for unconstrained smooth DC program and extended by Niu et al. in 2019 \cite{Niu19higher} for general convex constrained smooth and nonsmooth DC programs. 
	
	Let us denote $z^k\in \argmin \{g(x) - \langle y^k,x \rangle\}$ for an optimal solution of the convex subproblem of DCA. The general idea of BDCA in \cite{Niu19higher} is to introduce a line search along the \emph{DC descent direction} (a feasible and descent direction generated by two consecutive iterates of DCA) as $d^k:= z^{k}-x^{k}$ to find a better candidate $x^{k+1}$. It is shown in \cite{Niu19higher} that $f'(z^{k};d^k)\leq 0$ and $f'(z^{k};d^k)\leq -\rho \|d^k\|^2$ if $h$ is $\rho$-strongly convex, where $f'(z^{k};d^k)$ is the classical directional derivative of $f$ at $z^{k}$ along $d^k$. Hence, $d^k$ is a `potentially' descent direction of $f$ at $z^{k}$. Particularly, if $\setC$ is a polyhedral convex set and $\setA(x^k)$ denotes the active set of $\setC$ at $x^k$, then $\setA(z^{k})\subset \setA(x^{k})$ is a necessary and sufficient condition for $d^k$ being a DC descent direction; if $\setC$ is convex but not polyhedral, then $\setA(z^{k})\subset \setA(x^{k})$ is just a necessary (not always sufficient) condition for $d^k$ being a DC descent direction. The reader is referred to \cite{Niu19higher} for more discussion on the DC descent direction and BDCA algorithm.

	BDCA for problem \eqref{prob:dcp} is summarized in \Cref{alg:BDCA}.
	\begin{algorithm}[ht!]
		\caption{BDCA}
		\label{alg:BDCA}
		\begin{algorithmic}[1]
			\STATE \textbf{Initialization:} $x^0\in \dom \partial h$;
			\FOR{$k=0,1,\ldots$}
			\STATE $y^k \in \partial h(x^k)$;
			\STATE $z^{k}\in \argmin\{g(x)-\langle x, y^k \rangle :x\in \setC\}$;
			\STATE $d^k\leftarrow z^{k}-x^{k}$;
			\STATE initialize $x^{k+1}\leftarrow z^k$;
			\IF{$\setA(z^{k})\subset \setA(x^{k})$ and $f'(z^{k};d^k) < 0$}
			\STATE $\alpha_k\leftarrow \text{LineSearch}(z^k,d^k)$;
			\STATE $x^{k+1} \leftarrow z^k + \alpha_k d^k$;
			\ENDIF
			\ENDFOR
		\end{algorithmic}
	\end{algorithm}
	\\Some comments on BDCA:
	\begin{itemize}[leftmargin=12pt]
		\item For proceeding line search, we have to check the conditions $\setA(z^k)\subset \setA(x^k)$ and $f'(z^k;d^k)<0$. In particular, if $f$ is differentiable at $z^k$, then $f'(z^k;d^k)$ is reduced to $\langle \nabla f(z^k), d^k \rangle,$ which is easy to compute.
		\item The line search procedure \verb|LineSearch|($z^k$,$d^k$) aims at finding an optimal stepsize $\alpha_k$ such that
		$$\alpha_k = \argmin \{f(z^k + \alpha d^k): z^k + \alpha d^k \in \setC, \alpha \geq 0\}.$$ 
		This problem could be solved either exactly (exact line search) or inexactly (inexact line search) depending on the problem structure and problem size. In general, finding an exact solution for $\alpha_k$ is computationally expensive and not really needed. In practice, we often find an approximate solution for $\alpha_k$ with an inexpensive procedure (e.g., Armijo-type, Goldstein-type, Wolfe-type \cite[Chapter 3]{wright2006numerical}), 
		but the exact line search will lead to the best candidate to update $x^{k+1}$. Note that for exact line search with unbounded $\setC$, the sequence $\{\alpha_k\}$ maybe unbounded. Hence, we have to study the boundedness of $\alpha_k$ which is essential to the well-definiteness of $\{x^{k}\}$ and the convergence of BDCA.    
		See successful examples in \cite{Niu19higher} for BDCA with inexact Armijo-type line search and in \cite{zhang2022boosted} for BDCA with exact line search to the higher-order moment MVSK portfolio optimization problem.  
	\end{itemize}
	BDCA enjoys the next convergence theorem:
	\begin{theorem}[Convergence theorem of BDCA, see \cite{niu2022convergence,Niu19higher}]\label{thm:convBDCA}
		Let $\{(x^k,y^k,z^k)\}$ be the sequence generated by BDCA for problem \eqref{prob:dcp} from $x^0\in \dom \partial h$. Let $g$ (resp. $h$) be convex over $\setC$ with modulus $\rho_g\geq 0$ (resp. $\rho_h\geq 0$). If either $g$ or $h$ is strongly convex over $\setC$ (i.e., $\rho_g + \rho_h>0$) and the sequence $\{(x^k,y^k,z^k)\}$ is bounded, then 
		\begin{itemize}[leftmargin=12pt]
			\item (Convergence of $\{f(x^k)\}$) the sequence $\{f(x^k)\}$ is non-increasing and convergent.
			\item (Convergence of $\{\|x^k-z^{k}\|\}$ and  $\{\|x^k-x^{k+1}\|\}$) $$\|x^k-z^{k}\|\xrightarrow{k\to \infty} 0 \quad \text{ and } \quad \|x^k-x^{k+1}\|\xrightarrow{k\to \infty} 0.$$
			\item (Subsequential convergence of $\{x^k\}$) any cluster point of the sequence $\{x^k\}$ is a DC critical point of \eqref{prob:dcp}. Moreover, if $h$ is continuously differentiable, then any cluster point of the sequence $\{x^k\}$ is a strongly DC critical point of \eqref{prob:dcp}.
			\item (Global convergence of $\{x^k\}$) furthermore, if $f$ is a KL function, $\setC$ is a semi-algebraic set, and $h$ has locally Lipschitz continuous gradient over $\setC$, then $\{x^k\}$ converges to a strongly DC critical point of \eqref{prob:dcp}, which is also a KKT point of \eqref{prob:dcp}.
		\end{itemize}
	\end{theorem}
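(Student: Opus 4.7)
The plan is to adapt the classical DCA convergence analysis to incorporate the additional line search update. The starting point is the standard sufficient decrease at the DCA iterate $z^k$. Using optimality of $z^k$ in the convex subproblem $\min\{g(x) - \langle y^k, x \rangle : x \in \setC\}$ together with $\rho_g$-strong convexity of $g$ over $\setC$, and the subgradient inequality $h(z^k) \geq h(x^k) + \langle y^k, z^k - x^k \rangle + \frac{\rho_h}{2}\|z^k - x^k\|^2$ furnished by $y^k \in \partial h(x^k)$ and $\rho_h$-strong convexity of $h$, one derives the fundamental descent
\begin{equation*}
f(z^k) \leq f(x^k) - \frac{\rho_g + \rho_h}{2}\|z^k - x^k\|^2.
\end{equation*}
By the very definition of the line search, $f(x^{k+1}) \leq f(z^k)$ (the value $\alpha_k = 0$, giving $x^{k+1} = z^k$, is always feasible in the line-search problem). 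Boundedness of $\{x^k\}$ together with lower semicontinuity of $f$ on $\setC$ gives a finite lower bound, and hence $\{f(x^k)\}$ is non-increasing and convergent.

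Telescoping the sufficient decrease inequality over $k$ and exploiting $\rho_g + \rho_h > 0$ yields $\sum_{k} \|z^k - x^k\|^2 < \infty$, so $\|z^k - x^k\| \to 0$. Since $x^{k+1} = z^k + \alpha_k(z^k - x^k)$, one has $\|x^{k+1} - x^k\| = (1 + \alpha_k)\|z^k - x^k\|$, so $\|x^{k+1} - x^k\| \to 0$ provided $\{\alpha_k\}$ stays bounded. The boundedness of $\{\alpha_k\}$ is extracted from the feasibility requirement $z^k + \alpha_k d^k \in \setC$ combined with boundedness of $\{x^k\}$ and $\{z^k\}$, which confines the segment along direction $d^k$ traversed by the line search.

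For any cluster point $x^*$ of $\{x^k\}$, I extract a subsequence $x^{k_j} \to x^*$; since $\|z^{k_j} - x^{k_j}\| \to 0$, we also get $z^{k_j} \to x^*$. By the hypothesized boundedness of $\{y^k\}$, a further (relabelled) subsequence satisfies $y^{k_j} \to y^*$, and the closed-graph property of the subdifferential of the proper lsc convex function $h$ yields $y^* \in \partial h(x^*)$. The first-order optimality condition for the DCA subproblem reads $y^{k_j} \in \partial(g + \chi_{\setC})(z^{k_j})$; the same closure argument then gives $y^* \in \partial(g + \chi_{\setC})(x^*)$, so $x^*$ is a DC critical point. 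When $h$ is differentiable, $\partial h(x^*) = \{\nabla h(x^*)\}$, so $x^*$ is in fact strongly DC critical.

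The final global convergence under the KL hypothesis follows the Attouch--Bolte--Svaiter scheme, the two required ingredients being the sufficient decrease established above and a subgradient bound of the form $w^{k+1} \in \partial(f + \chi_{\setC})(x^{k+1})$ with $\|w^{k+1}\| \leq M\bigl(\|x^{k+1} - x^k\| + \|z^k - x^k\|\bigr)$ for some $M > 0$. The locally Lipschitz continuity of $\nabla h$ on $\setC$ enters precisely here, by allowing us to transfer the subgradient inclusion from the DCA iterate $z^k$ back to the line-search iterate $x^{k+1}$. Semialgebraicity of $\setC$ combined with $f$ being KL ensures $F := f + \chi_{\setC}$ is a KL function, and the standard descent-plus-subgradient telescoping then produces $\sum_{k} \|x^{k+1} - x^k\| < \infty$, so $\{x^k\}$ is Cauchy and converges to a limit that is strongly DC critical by the preceding paragraph, hence a KKT point of \eqref{prob:dcp}. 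The main obstacles are precisely the two places where the line search perturbs the classical DCA analysis: the boundedness of $\{\alpha_k\}$ on a possibly unbounded $\setC$, and the construction of the subgradient bound at $x^{k+1}$ rather than at $z^k$, which is exactly what forces the locally Lipschitz gradient hypothesis on $h$.
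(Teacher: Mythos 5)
The paper itself offers no proof of this theorem: it is imported verbatim from \cite{niu2022convergence,Niu19higher}, so your attempt can only be judged on its own merits. Your overall route (DCA sufficient decrease at $z^k$, monotonicity through the line search since $\alpha=0$ is always feasible, telescoping to get $\|z^k-x^k\|\to 0$, closed-graph argument for criticality, Attouch--Bolte--Svaiter scheme for the KL part) is the standard one and matches what the cited references do. One small slip: $f=g-h$ need not be lower semicontinuous (the $-h$ part is upper semicontinuous), so the lower bound on $\{f(x^k)\}$ should come from the standing assumption in \eqref{prob:dcp} that $\alpha$ is finite, not from lsc of $f$.

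There are, however, two genuine gaps, both sitting exactly at the places you yourself flag as ``the main obstacles'' but then do not resolve. First, the step $\|x^{k+1}-x^k\|\to 0$. You have $\|x^{k+1}-x^k\|=(1+\alpha_k)\|d^k\|$ and claim $\{\alpha_k\}$ is bounded because the iterates are bounded and feasible. Boundedness of $x^{k+1}=z^k+\alpha_k d^k$ together with boundedness of $z^k$ only bounds the \emph{product} $\alpha_k\|d^k\|$; since $\|d^k\|\to 0$, nothing in your argument prevents $\alpha_k\to\infty$ with $\alpha_k\|d^k\|$ staying bounded away from zero, in which case $\|x^{k+1}-z^k\|\not\to 0$. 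What is really needed is $\alpha_k\|d^k\|\to 0$, and the exact line search by itself does not supply a decrease of the form $f(x^{k+1})\le f(z^k)-c\,\alpha_k^2\|d^k\|^2$ whose summation would give this (that quadratic decrease is precisely the mechanism in the Armijo version of BDCA). You must either prove a uniform bound on $\alpha_k$ (the paper explicitly warns this is a separate issue for exact line search, and the bounds $\bar{\alpha}_k$ in \eqref{eq:alphabark_LnP} and \eqref{eq:alphabark_qp} are not uniformly bounded in $k$ in general) or restructure the argument around $\{z^k\}$. Second, for the KL part you assert a subgradient bound $w^{k+1}\in\partial(f+\chi_{\setC})(x^{k+1})$ but never construct $w^{k+1}$: the subproblem optimality gives $y^k\in\partial(g+\chi_{\setC})(z^{k})$, and local Lipschitz continuity of $\nabla h$ lets you compare $\nabla h(x^{k+1})$ with $\nabla h(z^k)$, but it does not manufacture an element of $\partial(g+\chi_{\setC})$ at the shifted point $x^{k+1}$ --- the subdifferential of a nonsmooth convex function admits no such Lipschitz transfer between nearby points. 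The cited analyses get around this with an auxiliary sequence or surrogate Lyapunov function; without some such device the global-convergence bullet is not established.
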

	
	\section{DC Formulations and DCA/BDCA for \eqref{eq:seicp}}\label{sec:dcforeicp}
	In this section, we will focus on establishing DC programming formulations for \eqref{prob:seicp_log} and \eqref{prob:seicp_qcqp}, and applying DCA and BDCA for solving them.
	
	\subsection{DC formulation and DCA/BDCA for \eqref{prob:seicp_log}}
	\paragraph{\textbf{DC formulation for \eqref{prob:seicp_log}}}
	The problem \eqref{prob:seicp_log} has a DC formulation as
	$$\min \{g(x) - h(x): x\in \Omega\},$$
	where 
	\begin{equation}\label{eq:gandhforlnP}
		g(x)=\frac{\eta}{2}\|x\|_2^2-\ln(x^{\top}A x), h(x) =\frac{\eta}{2}\|x\|_2^2-\ln(x^{\top} B x), \nabla h(x)=\eta x - \frac{2B x}{x^{\top} B x}.
	\end{equation}
	We will prove that both $g$ and $h$ are \emph{strongly convex} and \emph{have Lipschitz continuous gradients} over $\Omega$ (classical definition in optimization, see e.g., \cite[Chapter 5]{beck2017first}) for some large enough $\eta$. Note that the function $\varphi_{A}(x):= -\ln(x^{\top}Ax)$ is nonconvex on $\Omega$ for $\SPD$ matrix $A$. In fact, 
	\begin{equation}
		\label{eq:gradandhessofphiA}
		\nabla \varphi_{A} (x) = - \frac{2A x}{x^{\top} A x} \quad \text{and} \quad \nabla^2 \varphi_{A} (x) = \frac{4(Ax)(Ax)^{\top}}{(x^{\top} A x)^2} - \frac{2 A}{x^{\top} A x},
	\end{equation}
	where $\nabla^2 \varphi_{A} (x)$ may not be a PD matrix over $\Omega$. A very simple and convincing example is the following one:
		Let $$A = \begin{bmatrix}
			3 & 0\\
			0 & 1
		\end{bmatrix}\in \SPD.$$
	Then taking $\bar{x}=[1,0]^{\top}\in \Omega$, we get 
	$$\nabla^2 \varphi_{A} (\bar{x}) = \begin{bmatrix}
		2 & 0\\
		0 & -\frac{2}{3}
	\end{bmatrix},$$
which is obviously not a PD matrix. Hence, $\varphi_{A}$ is nonconvex on $\Omega$. 

	The next lemma shows that there exists some large enough $\eta$ such that both $g$ and $h$ are strongly convex over $\Omega$. 
	\begin{lemma}\label{lem:strongconvexityofgandh}
		Let \begin{equation}
			\label{eq:etabar}
			\bar{\eta} = 4n \max\{\kappa_A^2,\kappa_B^2\},
		\end{equation}
		where $\kappa_A$ and $\kappa_B$ are condition numbers of $A$ and $B$. 
		Then for all $\eta \geq \bar{\eta}$, both $g$ and $h$ defined in \eqref{eq:gandhforlnP} are strongly convex over $\Omega$. 
	\end{lemma}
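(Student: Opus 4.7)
The plan is to show that, for $\eta\geq\bar{\eta}$, both Hessians $\nabla^2 g(x)$ and $\nabla^2 h(x)$ are uniformly bounded below in the positive semidefinite order by a positive multiple of the identity on $\Omega$. Since $g$ and $h$ share the same structure (with $A$ replaced by $B$), it suffices to argue for $g$ and then invoke symmetry for $h$ using $\bar{\eta}\geq 4n\kappa_B^2$.

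First, from the Hessian formula in \eqref{eq:gradandhessofphiA}, I would write
\[
\nabla^2 g(x) = \eta I + \frac{4(Ax)(Ax)^{\top}}{(x^{\top} A x)^2} - \frac{2A}{x^{\top} A x}.
\]
The rank-one term is PSD and can be dropped in a lower bound, while $A\preceq \lambda_{\max}(A)\,I$ yields the pointwise estimate
\[
\nabla^2 g(x) \succeq \left(\eta - \frac{2\lambda_{\max}(A)}{x^{\top} A x}\right) I.
\]

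Next, I would control $x^{\top} A x$ from below on $\Omega$. Since $\|x\|_1=1$ and $x\geq 0$ for $x\in\Omega$, Cauchy--Schwarz gives $\|x\|_2^2 \geq \|x\|_1^2/n = 1/n$, and the SPD property of $A$ then implies $x^{\top} A x \geq \lambda_{\min}(A)\|x\|_2^2 \geq \lambda_{\min}(A)/n > 0$. Hence
\[
\frac{2\lambda_{\max}(A)}{x^{\top} A x} \leq 2n\,\kappa_A \leq 2n\,\kappa_A^2,
\]
where the last step uses $\kappa_A\geq 1$. Combined with $\eta\geq\bar{\eta}\geq 4n\kappa_A^2$, this yields $\nabla^2 g(x)\succeq (\eta/2)\,I$ for every $x\in\Omega$, so $g$ is strongly convex on $\Omega$ with modulus at least $\eta/2$. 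The identical argument with $B$ in place of $A$ gives the same conclusion for $h$.

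I do not expect any real obstacle: the only non-mechanical ingredient is the elementary simplex bound $\|x\|_2^2\geq 1/n$, and the rest is a routine Hessian calculation. A sharper uniform bound of the form $\bar{\eta}=2n\max\{\kappa_A,\kappa_B\}$ can actually be extracted from the same analysis, so the stated $4n\max\{\kappa_A^2,\kappa_B^2\}$ should be regarded as a clean, conservative sufficient condition rather than the tightest possible constant.
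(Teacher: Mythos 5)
Your proof is correct, and it takes a genuinely different (and in fact cleaner) route than the paper. The paper works with $\varphi_A(x)=-\ln(x^{\top}Ax)$ and bounds the \emph{spectral radius} $\rho(\nabla^2\varphi_A(x))$: it first invokes the eigenvalue monotonicity theorem to majorize $\nabla^2\varphi_A(x)$ by the rank-one matrix $4(Ax)(Ax)^{\top}/(x^{\top}Ax)^2$, then bounds that matrix's norm by $4\lambda_{\max}(A)^2\|x\|_2^2/(\lambda_{\min}(A)^2\|x\|_2^4)\leq 4n\kappa_A^2$ via the spectral decomposition — this is exactly where the constant $4n\kappa_A^2$ comes from. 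You instead lower-bound $\nabla^2 g(x)=\eta I+4(Ax)(Ax)^{\top}/(x^{\top}Ax)^2-2A/(x^{\top}Ax)$ directly in the semidefinite order, discarding the PSD rank-one term and using $A\preceq\lambda_{\max}(A)I$ together with the same simplex bound $x^{\top}Ax\geq\lambda_{\min}(A)/n$. This buys you three things: (i) you only need to control the most negative eigenvalue of the non-$\eta I$ part rather than the full spectral radius, which is all that matters for convexity and avoids the slight delicacy in the paper's one-line spectral-radius comparison; (ii) you get the sharper threshold $2n\kappa_A$ in place of $4n\kappa_A^2$, together with an explicit modulus $\eta/2$; and (iii) your computation carries the correct sign $\nabla^2 g=\eta I+\nabla^2\varphi_A$ (the paper writes $\eta I-\nabla^2\varphi_A$, a harmless slip given that it bounds $\rho$, which controls both signs). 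One small caveat on your closing remark: taking $\bar{\eta}=2n\max\{\kappa_A,\kappa_B\}$ exactly only yields $\nabla^2 g\succeq 0$ at the boundary of your estimate, so a strict inequality (or a small margin) is needed there to claim a uniform strong convexity modulus; this does not affect the lemma as stated.
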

\begin{proof}
	We will show that $\bar{\eta}\geq \max\{\rho(\nabla^2\varphi_{A}(x)), \rho(\nabla^2\varphi_{B}(x))\}$ for all $x\in \Omega$, where $\rho(M)$ denotes the spectral radius of $M$. We first consider the matrix $A$, it follows from the positive definiteness of $A$ that $\forall x\in \Omega$,
	$$\rho(\nabla^2 \varphi_{A}(x)) < \rho\left( \nabla^2 \varphi_{A}(x) + \frac{2 A}{x^{\top} A x}\right)= \rho\left(\frac{4(Ax)(Ax)^{\top}}{(x^{\top} A x)^2}\right), $$
	where the first strict inequality is due to the renowned monotonicity theorem \cite[Corollary 4.3.12]{horn2012matrix} and the second equality comes from \eqref{eq:gradandhessofphiA}. Then,
	$$\rho\left(\frac{4(Ax)(Ax)^{\top}}{(x^{\top} A x)^2}\right) = \frac{4\rho((Ax)(Ax)^{\top})}{(x^{\top} A x)^2}= \frac{4~\|Ax\|_2^2}{(x^{\top} A x)^2}.$$
	Let $A=P^{\top}\Lambda P$ be the spectral decomposition of the $\SPD$ matrix $A$, $\Lambda = \diag(\lambda_1,\ldots,\lambda_n)$ where $\lambda_1\leq \cdots\leq \lambda_n$ are eigenvalues of $A$ with $\lambda_1>0$, and denote $y=Px$. Then 
	\begin{equation}
		\label{eq:ubofAx}
		\|Ax\|_2^2 = y^{\top} \Lambda^2 y = \sum_{i=1}^{n}\lambda_i^2 y_i^2 \leq \lambda_n^2 \|y\|_2^2 = \lambda_n^2 \|x\|_2^2,
	\end{equation}
	and
	\begin{equation}
		\label{eq:lbofxAx}
	(x^{\top}Ax)^2 = (y^{\top}\Lambda y)^2 = (\sum_{i=1}^{n}\lambda_i y_i^2)^2 \geq \lambda_1^2 (\sum_{i=1}^{n}y_i^2)^2 = \lambda_1^2 \|y\|_2^4 = \lambda_1^2 \|x\|_2^4.
	\end{equation}
	Hence,
	$$ \rho(\nabla^2 \varphi_{A}(x)) < \frac{4~\|Ax\|_2^2}{(x^{\top} A x)^2} \overset{\eqref{eq:ubofAx}\eqref{eq:lbofxAx}}{\leq} \frac{4\lambda_n^2 \|x\|_2^2}{\lambda_1^2 \|x\|_2^4} =  \frac{4\kappa_A^2}{ \|x\|_2^2} \leq  4n \kappa_A^2, \quad \forall x\in \Omega.$$ 
	Similar result can be obtained for $B$ as $\rho(\nabla^2 \varphi_{B}(x))< 4n\kappa_B^2, \forall x\in \Omega$. It follows immediately that for $\eta \geq \bar{\eta}:=4n\max\{\kappa_A^2,\kappa_B^2\}$, 
	$$\nabla^2 g(x) = \eta I - \nabla^2\varphi_{A}(x)\succ 0 \quad \text{and}\quad  \nabla^2 h(x) = \eta I - \nabla^2\varphi_{B}(x)\succ 0, \quad \forall x\in  \Omega.$$
	Hence, there exists some $\sigma>0$ such that $x\mapsto g(x)-\frac{\sigma}{2}\|x\|_2^2$ and $x\mapsto h(x)-\frac{\sigma}{2}\|x\|_2^2$ are convex over $\Omega$, i.e., $g$ and $h$ are $\sigma$-strongly convex over $\Omega$ for all $\eta\geq \bar{\eta}$.
\end{proof}

	\begin{corollary}\label{cor:smoothnessofgandh}
		For all $\eta \geq \bar{\eta}$ with $\bar{\eta}$ defined in \eqref{eq:etabar}. The convex function $g$ (resp. $h$) defined in \eqref{eq:gandhforlnP} is $L_g$-smooth (resp. $L_h$-smooth) on $\Omega$ with 
		$$L_g = \eta + 2n\kappa_A \quad \text{and} \quad L_h = \eta + 2n\kappa_B.$$
	\end{corollary}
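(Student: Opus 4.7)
The plan is to bound $\sup_{x\in\Omega}\|\nabla^2 g(x)\|_2$ (and analogously for $h$) and then invoke the standard fact that, for a twice continuously differentiable convex function on a convex set, Lipschitz continuity of the gradient holds with constant equal to the uniform bound on the operator norm of the Hessian. So everything reduces to a Hessian bound on $\Omega$.

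First I would invoke \Cref{lem:strongconvexityofgandh}: whenever $\eta \geq \bar{\eta}$, $\nabla^2 g(x) = \eta I - \nabla^2 \varphi_A(x) \succeq 0$ on $\Omega$. This positive semidefiniteness lets me identify $\|\nabla^2 g(x)\|_2$ with its largest eigenvalue, which equals $\eta - \lambda_{\min}(\nabla^2 \varphi_A(x))$. The task thus reduces to producing a sharp lower bound for $\lambda_{\min}(\nabla^2 \varphi_A(x))$.

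Next I would exploit the ``PSD minus PSD'' decomposition furnished by \eqref{eq:gradandhessofphiA}, namely
$$\nabla^2 \varphi_A(x) = \frac{4(Ax)(Ax)^{\top}}{(x^{\top}Ax)^2} - \frac{2A}{x^{\top}Ax},$$
in which both summands are PSD. Weyl's inequality then yields $\lambda_{\min}(\nabla^2 \varphi_A(x)) \geq -\frac{2\lambda_{\max}(A)}{x^{\top}Ax}$. To conclude, I would use the geometry of $\Omega$: every $x\in\Omega$ satisfies $\|x\|_1=1$, hence $\|x\|_2^2 \geq 1/n$, which combined with $x^{\top}Ax \geq \lambda_{\min}(A)\|x\|_2^2$ gives $\frac{2\lambda_{\max}(A)}{x^{\top}Ax} \leq 2n\kappa_A$. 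Chaining these bounds produces $\|\nabla^2 g(x)\|_2 \leq \eta + 2n\kappa_A = L_g$, and the proof of $L_h$-smoothness for $h$ is obtained by replacing $A$ with $B$ verbatim.

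The main (mild) subtlety I would flag is the asymmetric role of the two PSD contributions in $\nabla^2 \varphi_A$. Because $g$ is already known to be convex on $\Omega$, only $\lambda_{\min}(\nabla^2 \varphi_A(x))$ enters the operator norm, and this minimum is controlled purely by the subtracted term $\tfrac{2A}{x^{\top}Ax}$, not by the rank-one term. A naive triangle-type estimate $\|\nabla^2 g(x)\|_2 \leq \eta + \rho(\nabla^2 \varphi_A(x))$ combined with the proof of \Cref{lem:strongconvexityofgandh} would only yield the looser constant $\eta + 4n\kappa_A^2$. Recognizing this asymmetry — and refraining from reusing the strong-convexity spectral bound off the shelf — is what makes the linear dependence on $\kappa_A$ (rather than $\kappa_A^2$) come out.
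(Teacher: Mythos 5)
Your proposal is correct and follows essentially the same route as the paper: both arguments reduce $L_g$-smoothness to a Hessian bound on $\Omega$, discard the subtracted rank-one PSD term $4(Ax)(Ax)^{\top}/(x^{\top}Ax)^2$ (you via Weyl's inequality on $\lambda_{\min}(\nabla^2\varphi_A)$, the paper via the monotonicity theorem on $\rho(\eta I-\nabla^2\varphi_A)$, which is the same estimate), and finish with $x^{\top}Ax\geq\lambda_{\min}(A)\|x\|_2^2\geq\lambda_{\min}(A)/n$ on the simplex. Your closing remark correctly identifies why the constant is $\eta+2n\kappa_A$ rather than the cruder $\eta+4n\kappa_A^2$ one would get by recycling the spectral bound from \Cref{lem:strongconvexityofgandh}.
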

	\begin{proof}
		It follows from \Cref{lem:strongconvexityofgandh} that for all $x\in \Omega$,
		\begin{eqnarray*}
			\rho(\nabla^2 g(x)) &=& \rho(\eta I - \nabla^2\varphi_{A}(x))\\
			&=& \rho\left(\eta I + \frac{2 A}{x^{\top} A x} - \frac{4(Ax)(Ax)^{\top}}{(x^{\top} A x)^2} \right) \\
			&\leq&  \rho\left(\eta I + \frac{2 A}{x^{\top} A x}\right)\\
			&=&\eta + \frac{2 \rho(A)}{x^{\top} A x}\\
			&\leq& \eta + \frac{2\lambda_n}{\lambda_1/n} = \eta + 2n \kappa_A = L_g,
		\end{eqnarray*}
	where the last inequality is due to $\rho(A)=\lambda_n$ and $x^{\top}A x \overset{\eqref{eq:lbofxAx}}{\geq} \lambda_1 \|x\|_2^2 \geq \lambda_1/n$ for all $x\in \Omega$. 
		Hence, $x\mapsto \frac{L_g}{2}\|x\|_2^2 - g(x)$ is convex over $\Omega$, implying that $g$ is $L_g$-smooth on $\Omega$. Similar result can be obtained for $h$ being $L_h$-smooth on $\Omega$.
	\end{proof}

	Applying DCA and BDCA to \eqref{prob:seicp_log} requires solving the convex subproblems:
	\begin{equation}
		\label{prob:LnPk}
		\boxed{\min \{ \frac{\eta}{2}\|x\|_2^2-\ln(x^{\top}A x) - \langle x, \nabla h(x^k) \rangle : x\in \Omega \}} \tag{LnPk}
	\end{equation}
	with strongly convex objective function, whose optimal solution exists and is unique. 
	
	Note that 
	since $\bar{\eta}=O(n\max\{\kappa_A^2,\kappa_B^2\})$, then for \eqref{prob:LnPk} with ill-conditioned $A$ or $B$ and with very large $n$, the parameters $\bar{\eta}$, $L_g$ and $L_h$ will be also very large. In this case, solving \eqref{prob:LnPk} will become cumbersome, since a very large Lipschitz constant $L_g$ often corresponds to a very small $1/L_g$ stepsize to slow down many solution approaches to \eqref{prob:LnPk}. Furthermore, if $\mu$ is too large, one may suffer from insatiability issue when solving an ill-conditioned problem, which may be a potential drawback of the formulation. 
	
	Next, we propose applying the renowned FISTA \cite{beck2009fista} (a proximal gradient method with Nesterov's acceleration) for solving \eqref{prob:LnPk}. 
	
	\paragraph{\textbf{FISTA for solving subproblem \eqref{prob:LnPk}}}
	FISTA is an efficient algorithm for solving the composite optimization problem 
	$$\min_{x\in  \R^n} f(x):=\phi(x) + \psi(x)$$
	under the assumptions that $\phi:\R^n\to \R$ is $L_{\phi}$-smooth and convex and $\psi$ belongs to $\Gamma_0(\R^n)$. 
	Problem \eqref{prob:LnPk} can be rewritten as 
	$$\min_{x\in\R^n}  \underbrace{\frac{\eta}{2}\|x\|_2^2-\ln(x^{\top}A x) - \langle x, \nabla h(x^k) \rangle}_{\phi(x)}  + \underbrace{\chi_{\Omega}(x)}_{\psi(x)}$$
	by introducing the indicator function $\chi_{\Omega}$ into the objective function, where $\phi$ is $L_g$-smooth and convex and $\psi$ belongs to $\Gamma_0(\R^n)$. So FISTA is applicable to \eqref{prob:LnPk} as described in \Cref{alg:FISTA_for_LnPk}.
	\begin{algorithm}[ht!]
		\caption{FISTA for \eqref{prob:LnPk}}
		\label{alg:FISTA_for_LnPk}
		\begin{algorithmic}[1]
			\REQUIRE $y^0=u^0=x^k\neq 0$; $t_0=1$;
			\FOR{$i=0,1,\ldots$}
			\STATE pick $L_i>0$;\label{codeline:3_fista_LnPk}
			\STATE $u^{i+1} \leftarrow P_{\Omega}\left( y^i - \frac{1}{L_i}\nabla \phi(y^i)\right);$\label{codeline:4_fista_LnPk}
			\STATE $t_{i+1}\leftarrow \frac{1+\sqrt{1+4t_i^2}}{2}$;
			\STATE $y^{i+1} \leftarrow u^{i+1} + \left( \frac{t_i -1}{t_{i+1}}\right)(u^{i+1}-u^i)$;
			\ENDFOR
		\end{algorithmic}
	\end{algorithm}
	\\Here are some comments on \Cref{alg:FISTA_for_LnPk}:
\begin{itemize}[leftmargin=12pt]
	\item In line \ref{codeline:4_fista_LnPk}, $P_{\Omega}(v)$ denotes the \emph{simplex projection} of vector $v$, which is derived from 
	\begin{eqnarray*}
		\prox_{\psi/L_i}\left( y^i - \frac{1}{L_i}\nabla \phi(y^i)\right) &=& \prox_{\chi_{\Omega}/L_i}(y^i - \frac{1}{L_i}\nabla \phi(y^i)) \\
		&=& \argmin_{x\in \Omega}\left\{ \left\|x - \left( y^i - \frac{1}{L_i}\nabla \phi(y^i)\right) \right\|_2^2\right\}\\
		&=& P_{\Omega}\left( y^i - \frac{1}{L_i}\nabla \phi(y^i)\right),
	\end{eqnarray*}
where $\prox$ is the classical proximal operator (see e.g. \cite{beck2017first}) and   
$$\nabla \phi(y^i) = \eta (y^i-x^k) - \frac{2A y^i}{\langle y^i, A y^i\rangle} + \frac{2B x^k}{\langle x^k, B x^k\rangle}.$$
The simplex projection $P_{\Omega}(v)$ is computed by 
$$P_{\Omega}(v) = [v - \lambda^*e]_+$$
where $\lambda^*$ is a root of the equation $e^{\top}[v-\lambda^*e]_+ = 1$ (see e.g., \cite[Corollary 6.29]{2016Fast}). There are several efficient algorithms for computing the simplex project, such as the \emph{direct projection method} in \cite{1974Validation} and the \emph{Block Pivotal Principal Pivoting Algorithm} (BPPPA) in \cite{judice1992,Niu11}. See \cite{2016Fast} for an excellent review of several efficient algorithms to simplex projection with the worst case complexity of order $O(n\log(n))$.
Here, we propose using \Cref{alg:Simplex_Proj} proposed in \cite{1974Validation} (see also \cite{2016Fast}) with $O(n\log(n))$ worst case complexity for its simplicity and efficiency.
\begin{algorithm}[ht!]
	\caption{Simplex Projection}
	\label{alg:Simplex_Proj}
	\begin{algorithmic}[1]
		\REQUIRE $v\in \R^n$;
		\ENSURE $P_{\Omega}(v)$;
		\STATE sort $v$ into $z$ with $z_1\geq z_2\geq \cdots\geq z_n$;
		\STATE $N \leftarrow \max_{1\leq k\leq n} \{ k : (\sum_{r=1}^k z_r - 1)/k < z_k \}$;
		\STATE $\lambda^* \leftarrow (\sum_{r=1}^N z_r - 1)/N$;
		\RETURN $P_{\Omega}(v) = [v-\lambda^* e]_+$.	
	\end{algorithmic}
\end{algorithm} 
\item We consider two options for the choice of $L_i$ in line \ref{codeline:3_fista_LnPk}: constant and backtracking. \\
\textbf{Constant:} Fix $L_i = L_g$ for all $i$. This choice is suitable when $L_g$ is not too large.\\
\textbf{Backtracking:} 
Given two parameters $(s,r)$ with $s>0$ (an initial guess for $L_i$, expected to be smaller than $L_g$) and $r>1$ (the expansion parameter). One can start by initializing $L_{-1}=s$. Then at iteration $i$ ($i\geq 0$), by denoting the operator $T_{L_i}(y) := P_{\Omega}\left( y - \frac{1}{L_i}\nabla \phi(y)\right)$, we first set $L_i=L_{i-1}$ and test whether the inequality below is verified
\begin{equation}
	\label{eq:ineqsmoothness}
	\phi(T_{L_i}(y^i)) \leq \phi(y^i) + \langle \nabla \phi(y^i), T_{L_i}(y^i) - y^i\rangle + \frac{L_i}{2}\|T_{L_i}(y^i) - y^i\|_2^2.
\end{equation}
If yes, then we obtain a suitable $L_i$; Otherwise, we enlarge $L_i$ by $r L_i$ and test again the inequality \eqref{eq:ineqsmoothness}. This backtracking procedure is repeated until \eqref{eq:ineqsmoothness} is verified. Note that the inequality \eqref{eq:ineqsmoothness} is always satisfied for large enough $L_i$, because $\phi$ is $L_g$-smooth and this inequality holds whenever $L_i\geq L_g$. This procedure allows us to find some suitable $L_i$ smaller than $L_g$ (even without knowing $L_g$ in prior), such that the gradient step $y^i - \frac{1}{L_i}\nabla \phi(y^i)$ has some stepsize $1/L_i$ larger than the fixed stepsize $1/L_g$, which will potentially yield a better descent. 
\item It is known that FISTA has an $O(1/i^2)$ rate of convergence in function values using either constant or backtracking stepsize. The reader is refereed to \cite[Chapter 10.7]{beck2017first} for more discussion on FISTA. 
\end{itemize}
	
	\paragraph{\textbf{Exact line search in BDCA for \eqref{prob:seicp_log}}} We can compute exact line search efficiently as follows: consider the line search problem $$\alpha_k = \argmin \{f(z^k + \alpha d^k): z^k + \alpha d^k\in \Omega, \alpha\geq 0\}$$
    for $d^k\neq 0$. Then $$[z^k + \alpha d^k\in \Omega, \alpha\geq 0] \Leftrightarrow [e^{\top}(z^k + \alpha d^k) = 1, z^k + \alpha d^k \geq 0, \alpha \geq 0].$$
	It follows that 
	$$e^{\top}(z^k + \alpha d^k) = e^{\top}z^k + \alpha e^{\top} d^k = 1 $$
		since $z^k, x^k\in \Omega, \forall k\geq 1$ implies that $e^{\top}z^k=1$, $e^{\top}x^k=1$ and $e^{\top}d^k = e^{\top}(z^k-x^k) = 0.$
	$$[z^k + \alpha d^k\geq 0, \alpha\geq 0] \Leftrightarrow 0\leq \alpha \leq -\frac{z^k_i}{d^k_i}, \quad \forall i\in \mathcal{I}^k,$$
	where $\mathcal{I}^k:=\{i\in \{1,\ldots,n\}: d^k_i<0\}$.
	Hence, we obtain a bound for $\alpha$ as:
	$$0\leq \alpha \leq \bar{\alpha}_k$$
	with
	\begin{equation}
		\label{eq:alphabark_LnP}
		\boxed{\bar{\alpha}_k: = \min\left\{ -\frac{z^k_i}{d^k_i}, i\in \mathcal{I}^k \right\}}
	\end{equation}
	under the convention that $\min \emptyset = \infty$,
	and the line search is simplified as 
	\begin{equation}\label{prob:linesearch_LnP}
		\alpha_k = \argmin \{f(z^k + \alpha d^k): 0 \leq \alpha\leq  \bar{\alpha}_k\}.
	\end{equation} 
	\begin{proposition}\label{prop:linesearch_LnP}
		The exact line search in BDCA for \eqref{prob:seicp_log} at $z^k$ along $d^k$ is computed by
		$$\alpha_k = \argmin_{\alpha} \{q(\alpha): x\in \{0,\bar{\alpha}_k\}\cup \mathcal{Z}\},$$ 
		where $$\begin{cases}
			q(x)=(a_1x^2+b_1x+c_1)/(a_2x^2+b_2x+c_2),\\
			a_1=\langle d^k, B d^k\rangle, b_1=2\langle z^k, B d^k \rangle, c_1=\langle z^k, B z^k\rangle,\\
			a_2=\langle d^k, A d^k\rangle, b_2=2\langle z^k, A d^k \rangle, c_2=\langle z^k, A z^k\rangle,\\
			\bar{\alpha}_k = \min\left\{ -z^k_i/d^k_i, i\in \mathcal{I}^k \right\} \text{ with } \mathcal{I}^k=\{i\in \{1,\ldots,n\}: d^k_i<0\},\\
		\end{cases}$$ and $\mathcal{Z}$ is the set of all real roots of the binomial $$(a_1b_2-a_2b_1)x^2 + 2(a_1c_2-a_2c_1)x + b_1c_2-b_2c_1$$ within the interval $\left[0,\bar{\alpha}_k\right]$. Then we set $x^{k+1} = z^k+ \alpha_k d^k$.
	\end{proposition}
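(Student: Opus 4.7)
The plan is to reduce the line search, which at first glance involves a logarithmic objective on a simplex segment, to the minimization of a scalar rational function on a compact interval, whose critical points are precisely the real roots of the stated quadratic polynomial. First I would substitute the DC decomposition \eqref{eq:gandhforlnP} into $f = g - h$ to see that the additive quadratic terms $\frac{\eta}{2}\|x\|_2^2$ cancel, leaving $f(x) = \ln(x^{\top}Bx) - \ln(x^{\top}Ax)$; along the line $x = z^k + \alpha d^k$, both arguments of the logarithms are quadratic in $\alpha$, which justifies writing $(z^k+\alpha d^k)^{\top}B(z^k+\alpha d^k) = a_1\alpha^2 + b_1\alpha + c_1$ and similarly for $A$, producing the expressions for $a_i, b_i, c_i$ given in the statement.

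Next I would confirm that the feasibility constraint $z^k + \alpha d^k \in \Omega, \alpha \geq 0$ collapses to the interval $[0,\bar{\alpha}_k]$. The equality $e^{\top}(z^k+\alpha d^k) = 1$ holds identically in $\alpha$, since $e^{\top}d^k = e^{\top}z^k - e^{\top}x^k = 0$ by $z^k, x^k \in \Omega$, and the sign constraint $z^k + \alpha d^k \geq 0$ reduces entrywise to $\alpha \leq -z^k_i/d^k_i$ for every index $i$ with $d^k_i < 0$, giving precisely the bound $\bar{\alpha}_k$ defined in \eqref{eq:alphabark_LnP}. Because $A, B \in \SPD$ and the iterates remain in $\Omega \setminus \{0\}$, both quadratics $a_j\alpha^2 + b_j\alpha + c_j$ stay strictly positive on $[0,\bar{\alpha}_k]$, so the ratio $q(\alpha)$ is well defined and smooth there.

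Then, since $\ln$ is strictly increasing, minimizing $f(z^k + \alpha d^k) = \ln q(\alpha)$ is equivalent to minimizing $q(\alpha)$ on the compact interval $[0,\bar{\alpha}_k]$ (with the convention handling the case $\bar{\alpha}_k = \infty$ via the asymptotic behavior $q(\alpha)\to a_1/a_2$). A Weierstrass argument then guarantees that $\alpha_k$ exists and lies either at a boundary point of $[0,\bar{\alpha}_k]$ or at an interior critical point. Computing $q'(\alpha)$ by the quotient rule and clearing the (positive) denominator, the critical-point condition becomes the vanishing of the quadratic numerator, which after expansion and cancellation of the $\alpha^3$ terms reduces to
\[
(a_1 b_2 - a_2 b_1)\alpha^2 + 2(a_1 c_2 - a_2 c_1)\alpha + (b_1 c_2 - b_2 c_1) = 0,
\]
exactly the polynomial specified in the statement. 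Hence the minimizer lies in $\{0,\bar{\alpha}_k\} \cup \mathcal{Z}$, and $\alpha_k$ is obtained by evaluating $q$ at each candidate and choosing the smallest value.

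The main obstacle is merely keeping the algebra tidy when differentiating and simplifying the degree-three numerator of $q'(\alpha)$ so that the $\alpha^3$ cross-terms cancel and one is left with a genuine quadratic; all the remaining ingredients (compactness of the feasible interval, positivity of the quadratic forms, monotonicity of $\ln$) are essentially structural observations that follow directly from \cref{hyp1} and the construction of $\bar{\alpha}_k$.
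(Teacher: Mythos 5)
Your proposal is correct and follows essentially the same route as the paper's proof: reduce the feasible set to the interval $[0,\bar{\alpha}_k]$, use the monotonicity of $\ln$ to replace the objective by the rational function $q$, and observe that the numerator of $q'$ is the stated quadratic, so the minimizer lies in $\{0,\bar{\alpha}_k\}\cup\mathcal{Z}$. The only additions you make (the Weierstrass existence remark, the explicit positivity of the two quadratics, and the handling of $\bar{\alpha}_k=\infty$) are harmless elaborations of steps the paper treats as immediate.
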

	\begin{proof}
		Consider the line search problem \eqref{prob:linesearch_LnP} whose objective function is $$f(z^k+\alpha d^k)=\ln\left( \frac{(z^k + \alpha d^k)^{\top}B (z^k + \alpha d^k)}{(z^k + \alpha d^k)^{\top}A (z^k + \alpha d^k)} \right).$$
		Then, by the strictly increasing of the function $\ln$, we get that 
		\begin{eqnarray*}
			\alpha_k &=& \argmin_{\alpha} \{f(z^k + \alpha d^k): 0 \leq \alpha\leq  \bar{\alpha}_k\}\\
			&=&\argmin_{\alpha} \left\{\frac{(z^k + \alpha d^k)^{\top}B (z^k + \alpha d^k)}{(z^k + \alpha d^k)^{\top}A (z^k + \alpha d^k)}: 0\leq \alpha \leq \bar{\alpha}_k\right\}\\
			&=&\argmin_{\alpha} \left\{\frac{\langle d^k, B d^k\rangle\alpha^2 +2\langle z^k, B d^k \rangle\alpha + \langle z^k, B z^k\rangle}{\langle d^k, A d^k\rangle\alpha^2 +2\langle z^k, A d^k \rangle \alpha + \langle z^k, A z^k\rangle}: 0\leq \alpha \leq \bar{\alpha}_k\right\}.
		\end{eqnarray*}
		Now, consider the optimization problem in form of 
		$$x^*= \argmin \Biggl\{ q(x):=\frac{a_1x^2+b_1x+c_1}{a_2x^2+b_2x+c_2}:\quad  0\leq x\leq \bar{x}
		\Biggr\},$$
		where $a_1x^2+b_1x+c_1$ and $a_2x^2+b_2x+c_2$ are strictly positive for all $x\in [0,\bar{x}]$. The derivative of $q$ is given by
		$$q'(x) = \frac{(a_1b_2-a_2b_1)x^2 + 2(a_1c_2-a_2c_1)x + b_1c_2-b_2c_1}{(a_2x^2+b_2x + c_2)^2},$$
		whose roots are exactly roots of the binomial 
		\begin{equation}
			\label{eq:polyofnumerator}
			(a_1b_2-a_2b_1)x^2 + 2(a_1c_2-a_2c_1)x + b_1c_2-b_2c_1,
		\end{equation}
		which can be computed without any difficulty. Let $\mathcal{Z}$ be the set of all real roots of this binomial within the interval $[0,\bar{x}]$. Clearly, all minima of $q$ over $[0,\bar{x}]$ should be included in $\{0,\bar{x}\}\cup \mathcal{Z}$. Then we get 
		$$x^*= \argmin_{x} \{q(x): x\in \{0,\bar{x}\}\cup \mathcal{Z}\}.$$
		Applying this to compute $\alpha_k$, we get
		$$\alpha_k = \argmin_{\alpha} \{q(\alpha): \alpha\in \{0,\bar{\alpha}_k\}\cup \mathcal{Z}\},$$ 
		where $$\begin{cases}
			a_1=\langle d^k, B d^k\rangle, b_1=2\langle z^k, B d^k \rangle, c_1=\langle z^k, B z^k\rangle,\\
			a_2=\langle d^k, A d^k\rangle, b_2=2\langle z^k, A d^k \rangle, c_2=\langle z^k, A z^k\rangle,
		\end{cases}$$ and $\mathcal{Z}$ is the set of all real roots of \eqref{eq:polyofnumerator} within $[0,\bar{\alpha}_k]$ where $\bar{\alpha}_k$ is given by \eqref{eq:alphabark_LnP}. 
	\end{proof}
	
	\paragraph{\textbf{DCA/BDCA for \eqref{prob:seicp_log}}} Now, we describe the BDCA for \eqref{prob:seicp_log} in \Cref{alg:BDCA_for_LnP}.
	\begin{algorithm}[ht!]
		\caption{BDCA for \eqref{prob:seicp_log}}
		\label{alg:BDCA_for_LnP}
		\begin{algorithmic}[1]
			\REQUIRE $x^0\neq 0$; $\eta\geq \bar{\eta}$ with $\bar{\eta}$ defined in \eqref{eq:etabar};
			\FOR{$k=0,1,\ldots$}
			\STATE compute $z^{k}\leftarrow \argmin \{ \frac{\eta}{2}\|x\|_2^2-\ln(x^{\top}A x) - \langle x, \nabla h(x^k) \rangle : x\in \Omega \}$ via FISTA;\label{codeline:2_bdca_log} 
			\STATE $d^k\leftarrow z^{k}-x^{k}$;
			\STATE initialize $x^{k+1}\leftarrow z^k$; \label{codeline:4_bdca_log}
			\IF{$\setA(z^{k})\subset \setA(x^{k})$ and $\left\langle \frac{B z^k}{\langle z^k, B z^k \rangle} - \frac{A z^k}{\langle z^k, A z^k \rangle}, d^k \right\rangle < 0$}\label{codeline:5_bdca_log}
			\STATE $\alpha_k \leftarrow \argmin_{\alpha} \{q(\alpha): x\in \{0,\bar{\alpha}_k\}\cup \mathcal{Z}\}$ as described in \Cref{prop:linesearch_LnP};\label{codeline:6_bdca_log}
			\STATE $x^{k+1} \leftarrow z^k + \alpha_k d^k;$
			\ENDIF\label{codeline:8_bdca_log}
			\ENDFOR
		\end{algorithmic}
	\end{algorithm}
	Some comments on \Cref{alg:BDCA_for_LnP} are described as follows: 
	\begin{itemize}[leftmargin=12pt]
		\item DCA for \eqref{prob:seicp_log} is just BDCA without the codes from line \ref{codeline:5_bdca_log} to \ref{codeline:8_bdca_log}.
		\item In line \ref{codeline:2_bdca_log}, $z^k$ is computed by solving the convex subproblem \eqref{prob:LnPk} via FISTA. In fact, only an approximate solution for $z^k$ (i.e., a feasible solution better than $x^k$ with a smaller objective value) is required to compute.
		\item In line \ref{codeline:5_bdca_log}, the active set $\setA(x^k)$ is defined by $\{i=1,\ldots,n: x^k_i=0\}$. The second condition is derived from $f'(z^k;d^k)<0$ with 
		$$f'(z^k;d^k) = \langle \nabla f(z^k), d^k \rangle = \left\langle \frac{2B z^k}{\langle z^k, B z^k \rangle} - \frac{2A z^k}{\langle z^k, A z^k \rangle}, d^k \right\rangle.$$ These are necessary and sufficient conditions for $d^k$ being a DC descent direction for polyhedral convex set. Note that, without checking these conditions and performing the line search all the time (i.e., removing the lines \ref{codeline:5_bdca_log} and \ref{codeline:8_bdca_log}), this algorithm still works fine, but we strongly suggest checking these conditions which often leads to better numerical performance in practice.  
		\item \Cref{thm:convDCA} and \Cref{thm:convBDCA} for the convergence of DCA and BDCA are fulfilled since $f$ is a KL function, both $g$ and $h$ are strongly convex on $\Omega$ due to \Cref{lem:strongconvexityofgandh}, and $h$ has locally Lipschitz continuous gradient on $\Omega$ due to \Cref{cor:smoothnessofgandh}. 
	\end{itemize}
	
	Note that we can solve \eqref{prob:eicp_frac} by the same algorithms (namely, DCA and BDCA) described in this subsection through the logarithmic formulation \eqref{prob:seicp_log} based on the equivalence between \eqref{prob:eicp_frac} and \eqref{prob:seicp_log}. 
	
	\subsection{DC formulation and DCA/BDCA for \eqref{prob:seicp_qcqp}}
	\paragraph{\textbf{DC formulation for \eqref{prob:seicp_qcqp}}}
	The problem \eqref{prob:seicp_qcqp} is a convex maximization problem with a trivial DC formulation in minimization form as 
	$$\min \{f(x)=g(x) - h(x): x^{\top}   B   x\leq 1, x\geq 0 \},$$
	where $$g(x)= 0 , \quad h(x) =x^{\top}   A   x \quad \text{ and }\quad \nabla h (x) = 2 A x.$$ 
	
	Applying DCA and BDCA to this DC decomposition requires solving the linear minimization subproblems over a compact convex set (the intersection of an ellipsoid and the nonnegative orthant) as
	\begin{equation}
		\label{prob:QPk}
		\boxed{\min \{ \langle -2Ax^k, x \rangle : x^{\top}   B   x\leq 1, x\geq 0 \},}\tag{QPk}
	\end{equation}
	which can be efficiently solved by many quadratic or second order cone programming solvers such as GUROBI, CPLEX and MOSEK.  
	
	\paragraph{\textbf{Exact line search in BDCA for \eqref{prob:seicp_qcqp}}} We follow a similar way as in \eqref{prob:seicp_log} to compute the exact line search. Consider the line search problem
	$$\alpha_k = \argmin \{f(z^k + \alpha d^k): (z^k + \alpha d^k)^{\top}B(z^k + \alpha d^k)\leq 1, z^k + \alpha d^k\geq 0, \alpha \geq 0\}.$$
	Then for all $d^k\neq 0$, we have that the binomial $(z^k + \alpha d^k)^{\top}B(z^k + \alpha d^k)\leq 1$ is equivalent to
	$$\alpha\leq \frac{-\langle z^k, B d^k\rangle + \sqrt{\langle z^k, B d^k\rangle^2 - \langle d^k, B d^k\rangle (\langle z^k, B z^k\rangle-1)}}{\langle d^k, B d^k\rangle}$$
	and
	$$[z^k + \alpha d^k\geq 0, \alpha\geq 0] \Leftrightarrow 0\leq \alpha \leq -\frac{z^k_i}{d^k_i}, \quad \forall i\in \mathcal{I}^k,$$
	where $\mathcal{I}^k:=\{i\in \{1,\ldots,n\}: d^k_i<0\}$.
	Combining them, we obtain a bound for $\alpha$ as:
	$$0\leq \alpha \leq \bar{\alpha}_k$$
	with
	\begin{equation}
		\label{eq:alphabark_qp}
		\boxed{\bar{\alpha}_k: = \min\left\{ \frac{-\langle z^k, B d^k\rangle + \sqrt{\langle z^k, B d^k\rangle^2 - \langle d^k, B d^k\rangle (\langle z^k, B z^k\rangle-1)}}{\langle d^k, B d^k\rangle}, -\frac{z^k_i}{d^k_i}, i\in \mathcal{I}^k \right\}.}
	\end{equation}
	\begin{proposition}\label{prop:linesearch_qcqp}
		The exact line search in BDCA for \eqref{prob:seicp_qcqp} at $z^k$ along $d^k$ is computed by
		$$\alpha_k = \begin{cases}\bar{\alpha}_k,& \text{if } f(z^k+\bar{\alpha}_kd^k)< f(z^k),\\
			0,& \text{otherwise},
		\end{cases}$$
		with $\bar{\alpha}_k$ computed in \eqref{eq:alphabark_qp}. Then we set $x^{k+1} = z^k+ \alpha_k d^k$.
		
	\end{proposition}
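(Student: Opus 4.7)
The plan is to exploit the fact that, along the ray $\alpha \mapsto z^k+\alpha d^k$, the objective $f(x) = -x^\top A x$ restricts to a strictly concave quadratic in $\alpha$, so its minimum over the compact interval $[0,\bar\alpha_k]$ can only be attained at an endpoint.

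More precisely, define
\[
\phi(\alpha) := f(z^k+\alpha d^k) = -(z^k+\alpha d^k)^\top A (z^k+\alpha d^k) = -\langle d^k, A d^k\rangle \alpha^2 - 2\langle z^k, A d^k\rangle\alpha - \langle z^k, A z^k\rangle.
\]
The first step is to observe that, since $A \in \SPD$ and $d^k \neq 0$, the leading coefficient $-\langle d^k,Ad^k\rangle$ is strictly negative, so $\phi$ is a strictly concave quadratic on $\R$. The second step is to recall that the feasibility constraints $(z^k+\alpha d^k)^\top B(z^k+\alpha d^k)\leq 1$, $z^k+\alpha d^k \geq 0$, $\alpha\geq 0$ reduce, by the preceding derivation, to the interval $0 \leq \alpha \leq \bar\alpha_k$ with $\bar\alpha_k$ given by \eqref{eq:alphabark_qp}; here one also notes in passing that the discriminant in \eqref{eq:alphabark_qp} is nonnegative because $z^k$ is feasible (so $\langle z^k,Bz^k\rangle - 1 \leq 0$) and $\langle d^k,Bd^k\rangle > 0$, ensuring $\bar\alpha_k$ is a well-defined nonnegative real number.

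The third and key step is the elementary fact that a strictly concave function on a compact interval attains its minimum at an endpoint of that interval. Applied to $\phi$ on $[0,\bar\alpha_k]$, this yields
\[
\alpha_k \in \argmin\bigl\{ \phi(0),\ \phi(\bar\alpha_k) \bigr\} = \argmin\bigl\{ f(z^k),\ f(z^k+\bar\alpha_k d^k)\bigr\}.
\]
Choosing $\alpha_k = \bar\alpha_k$ when $f(z^k+\bar\alpha_k d^k) < f(z^k)$ and $\alpha_k = 0$ otherwise produces exactly the stated formula, and then setting $x^{k+1} = z^k + \alpha_k d^k$ finishes the description of the BDCA update.

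I do not anticipate any serious obstacle: the only thing to be a little careful about is the edge case $d^k = 0$ (irrelevant, since BDCA only enters the line-search branch when $d^k$ is a genuine DC descent direction) and the well-definedness of $\bar\alpha_k$ as discussed above. The rest is just the one-line concavity argument on a closed interval.
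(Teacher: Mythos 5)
Your proof is correct and follows essentially the same route as the paper, which likewise reduces the line search to minimizing the concave quadratic $\alpha\mapsto -(z^k+\alpha d^k)^{\top}A(z^k+\alpha d^k)$ over $[0,\bar{\alpha}_k]$ and concludes that the minimum is attained at an endpoint. Your additional remarks on the nonnegativity of the discriminant in \eqref{eq:alphabark_qp} and the finiteness of $\bar{\alpha}_k$ are sound but only make explicit what the paper leaves implicit.
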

	\begin{proof}
		The result follows immediately from the concavity of $\alpha\mapsto f(z^k+\alpha d^k) = - (z^k+\alpha d^k)^{\top} A (z^k+\alpha d^k)$ over the interval $0\leq \alpha\leq \bar{\alpha}_k$.
	\end{proof}
	\paragraph{\textbf{DCA/BDCA for \eqref{prob:seicp_qcqp}}} Now, we describe BDCA for \eqref{prob:seicp_qcqp} in \Cref{alg:BDCA_for_QP}.
	\begin{algorithm}[ht!]
		\caption{BDCA for \eqref{prob:seicp_qcqp}}
		\label{alg:BDCA_for_QP}
		\begin{algorithmic}[1]
			\REQUIRE $x^0\neq 0$;
			\FOR{$k=0,1,\ldots$}
			\STATE $z^{k}\in \argmin\{-\langle x, 2Ax^k \rangle :x^{\top}   B   x\leq 1, x\geq 0 \}$;\label{codeline:2_bdca_qp}
			\STATE initialize $x^{k+1}\leftarrow z^k$; \label{codeline:3_bdca_qp}
			\STATE $d^k\leftarrow z^{k}-x^{k}$; \label{codeline:4_bdca_qp}
			\IF{$\setA(z^{k})\subset \setA(x^{k})$ and $-\langle 2 A z^k, d^k \rangle < 0$} \label{codeline:5_bdca_qp}
			\STATE $\mathcal{I}^k\leftarrow \{i\in \{1,\ldots,n\}: d^k_i<0\};$
			\STATE $\bar{\alpha}_k \leftarrow \min\left\{ \frac{-\langle z^k, B d^k\rangle + \sqrt{\langle z^k, B d^k\rangle^2 - \langle d^k, B d^k\rangle (\langle z^k, B z^k\rangle-1)}}{\langle d^k, B d^k\rangle}, -\frac{z^k_i}{d^k_i}, i\in \mathcal{I}^k \right\};$
			\IF{$f(z^k + \bar{\alpha}_k d^k)<f(z^k)$}
			\STATE $x^{k+1} \leftarrow z^k + \bar{\alpha}_k d^k;$
			\ENDIF
			\ENDIF\label{codeline:11_bdca_qp}
			\ENDFOR
		\end{algorithmic}
	\end{algorithm}
	\\Some comments on \Cref{alg:BDCA_for_QP} are summarized below: 
	\begin{itemize}[leftmargin=12pt]
		\item DCA for \eqref{prob:seicp_qcqp} is just BDCA without codes from line \ref{codeline:4_bdca_qp} to \ref{codeline:11_bdca_qp}.
		\item In line \ref{codeline:5_bdca_qp}, $\setA(z^k)\subset \setA(x^k)$ and $-\langle 2 A z^k, d^k \rangle < 0$ (since $f'(z^k;d^k) = \langle \nabla f(z^k),d^k \rangle = -\langle 2 A z^k, d^k \rangle$) serve as necessary conditions for $d^k$ being a DC descent direction. If one of the condition is not satisfied, then the line search is not needed. 
		\item The initial point $x^0$ can be taken arbitrarily as any nonzero point in $\R^n$. The nonsingularity of $A$ ensures that the coefficient of the linear objective function $-2Ax^k\neq 0$ whenever $x^k\neq 0$. In the case where $x^k=0$ for some $k$, then $z^k$ could be any feasible point of the convex subproblem in line \ref{codeline:3_bdca_qp}, and we suggest taking  $$z^k = \frac{\zeta}{\sqrt{\langle \zeta,B\zeta\rangle }}$$ with a random nonnegative and nonzero vector $\zeta$ in $\R^n$. This suggestion is also applicable to the classical DCA.
		\item \Cref{thm:convDCA} and \Cref{thm:convBDCA} for the convergence of DCA and BDCA are fulfilled since $f$ (as a quadratic function) is indeed a KL function, the constraint $\{x\in \R^n: x^{\top}Bx\leq 1, x\geq 0\}$ is a semi-algebraic set, $h$ is strongly convex and has globally Lipschitz continuous gradient over $\R^n$. 
	\end{itemize}
	
	\section{Extension to SQEiCP}\label{sec:SQEiCP}
	 Consider the extension \eqref{eq:sqeicp}. Let us denote $\SQEiCP{A,B,C}$ for the solution set of \eqref{eq:sqeicp}. The next hypothesis is a sufficient condition for the feasibility of $\SQEiCP{A,B,C}$ \cite{Bras16}:
	
	\begin{hypothesis}\label{hyp2}
		$A\in \SPD$, $B$ and $C$ are real symmetric matrices with $C\notin S_0=\{C\in \R^{n\times n}: \exists x\neq 0, x \geq 0, C  x \geq 0\}$.
	\end{hypothesis}
	\begin{theorem}[See \cite{Bras16}]\label{thm:existenceofsolutionsforqeicp}
		Under \Cref{hyp2}, $\SQEiCP{A,B,C}$ admits at least one positive and one negative quadratic complementary eigenvalues, and $0$ is not a quadratic complementary eigenvalue.
	\end{theorem}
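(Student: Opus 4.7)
The plan is to treat the three assertions separately. That $0$ is not a quadratic complementary eigenvalue is immediate: if it were, then substituting $\lambda = 0$ into the definition of \eqref{eq:sqeicp} would yield $w = Cx$ with $x \neq 0$, $x \geq 0$ and $Cx \geq 0$, placing $C$ in $S_0$ and contradicting \Cref{hyp2}. In particular, this observation shows that any quadratic complementary eigenvalue is strictly of one sign, so the remaining claims reduce to exhibiting one strictly positive and one strictly negative eigenvalue.

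For the positive case, I would linearize the quadratic pencil $\lambda^2 A + \lambda B + C$ via the companion substitution $y = \lambda x$, reducing \eqref{eq:sqeicp} to an equivalent \eqref{eq:seicp} on the augmented variable $z = (x^\top, y^\top)^\top \in \R^{2n}$. This yields $2n \times 2n$ block matrices built from $A$, $B$, $C$ whose symmetry can be enforced by a suitable symmetrization, and whose positive definiteness can then be secured under \Cref{hyp2}, possibly after applying the diagonal shift of \Cref{thm:1} so that \Cref{hyp1} holds for the augmented pair. One then invokes \Cref{cor:1} to extract a solution of the reduced SEiCP with a strictly positive $\lambda$-value, and maps back through the linearization to recover a positive quadratic complementary eigenvalue of the original problem. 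To obtain a negative one, I would apply the involution $\lambda \mapsto -\lambda$, which sends \eqref{eq:sqeicp} for $(A,B,C)$ to the analogous problem for $(A,-B,C)$ in the new unknown $\mu = -\lambda > 0$. Because \Cref{hyp2} is untouched by changing the sign of $B$, the construction above applies verbatim to $(A,-B,C)$ and produces a positive $\mu$-solution, i.e., a strictly negative $\lambda$-solution of the original SQEiCP.

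The principal obstacle will be designing the linearization so that the block matrices actually satisfy \Cref{hyp1}, and so that the correspondence between nontrivial SEiCP solutions $z = (x,y)$ of the auxiliary problem and nontrivial SQEiCP solutions $(x,\lambda)$ of the original problem is faithful; in particular, one must rule out spurious outputs in which $y$ fails to equal $\lambda x$ and guarantee that the recovered $x$ still satisfies $x \geq 0$, $x \neq 0$. This is precisely where the co-regularity of $A$ (automatic from $A \in \SPD$) and the exclusion $C \notin S_0$ enter in an essential way: they rule out the degenerate blocks in the linearized pencil and force the quadratic in $\lambda$ induced on each ray to have a real root of the desired sign. The detailed execution, including the careful choice of symmetrizing block structure, is carried out in \cite{Bras16}.
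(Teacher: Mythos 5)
The paper itself offers no proof of this theorem: it is imported wholesale from \cite{Bras16}, and the only related argument actually carried out in the paper is \Cref{thm:SQEiCP=SEiCP}, which performs exactly the companion linearization you describe but under the strictly stronger hypothesis $-C\in\SPD$ rather than $C\notin S_0$. Two of your three steps are sound. The zero-eigenvalue argument is complete and correct: $\lambda=0$ forces $w=Cx\geq 0$ with $0\neq x\geq 0$, which places $C$ in $S_0$ and contradicts \Cref{hyp2} (the complementarity $x^{\top}w=0$ is not even needed). The reduction of the negative case to the positive case via $(A,B,C)\mapsto(A,-B,C)$ and $\mu=-\lambda$ is also correct, since \Cref{hyp2} does not involve $B$; this is precisely the role of the matrix $H$ versus $G$ in \Cref{thm:SQEiCP=SEiCP}.

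The genuine gap is in the existence of a positive eigenvalue. You propose to arrange that the augmented pair satisfies \Cref{hyp1} ``under \Cref{hyp2}, possibly after applying the diagonal shift of \Cref{thm:1}'' and then invoke \Cref{cor:1}. This cannot work: in the linearization the role of $B$ is played by $D=\diag(A,-C)$, and the shift of \Cref{thm:1} replaces the $A$-role matrix $G$ by $G+\mu D$ while leaving $D$ untouched, so it only helps when $D$ is already SPD. But $D$ is SPD if and only if $-C$ is SPD, and $C\notin S_0$ does not imply this, nor even that $-C$ is strictly copositive: for $C=\left[\begin{smallmatrix}1&-10\\-10&1\end{smallmatrix}\right]$ one checks that $Cx\geq 0$ with $x\geq 0$ forces $x=0$, so $C\notin S_0$, yet $e_1^{\top}(-C)e_1=-1<0$. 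Since the feasibility theory for \eqref{eq:seicp} (in particular \Cref{cor:1}, and more generally the existence result quoted in the introduction) requires the $B$-role matrix to be at least strictly copositive, your route through \Cref{cor:1} is unavailable under \Cref{hyp2} alone; the same example also defeats the ray-wise heuristic at the end of your proposal, since $q_x(0)=x^{\top}Cx$ can be positive. This is exactly why \Cref{thm:SQEiCP=SEiCP} is stated only for $-C\in\SPD$ and why the general existence claim under $C\notin S_0$ must be obtained by the different argument in \cite{Bras16} rather than by the construction you sketch.
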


	Checking whether $C\in S_0$ is easy, which reduces to solving the feasibility problem of the linear program defined by:
	$$\min\{e^{\top} x: C  x\geq 0, e^{\top} x = 1, x\geq 0\}.$$
	In particular, we often suppose that $-C\in \SPD$, which implies $C\notin S_0$.

	\subsection{From SQEiCP to SEiCP}
	
	We can prove that SQEiCP under \Cref{hyp2} is equivalent to two SEiCPs with complementary eigenvector $(x,y)\in \R^n\times \R^n$ and complementary eigenvalue $\lambda> 0$ verifying
	
	\begin{equation}\label{eq:2nEiCP-1}
		 \begin{cases}
			\lambda D   \begin{bmatrix}
				y\\ x 
			\end{bmatrix} - G   \begin{bmatrix}
				y\\ x 
			\end{bmatrix} = \begin{bmatrix}
				w\\ v 
			\end{bmatrix}, \\
			e^{\top}   \begin{bmatrix}
				y\\ x 
			\end{bmatrix} = 1, \\
			y^{\top}   w + x^{\top}   v = 0,\\
			(x,y,v,w,\lambda)\geq 0,
		\end{cases}\tag*{\SEiCP{G,D}}
	\end{equation}
	or
	\begin{equation}\label{eq:2nEiCP-2}
		\begin{cases}
			\lambda D   \begin{bmatrix}
				y\\ x 
			\end{bmatrix} - H   \begin{bmatrix}
				y\\ x 
			\end{bmatrix} = \begin{bmatrix}
				w\\ v 
			\end{bmatrix}, \\
			e^{\top}   \begin{bmatrix}
				y\\ x 
			\end{bmatrix} = 1, \\
			y^{\top}   w + x^{\top}   v = 0, \\
			(x,y,v,w,\lambda)\geq 0,
		\end{cases}\tag*{\SEiCP{H,D}}
	\end{equation}
	where $D$, $G$, $H$ are matrices of augmented size $\R^{2n\times 2n}$ defined in the next \Cref{thm:SQEiCP=SEiCP}, which is similar to \cite[Proposition 1]{Bras16} for asymmetric QEiCP. 
	
	\begin{theorem}\label{thm:SQEiCP=SEiCP}
		Let $A\in \SPD$ and $-C\in \SPD$. Then $\SQEiCP{A,B,C}$ is equivalent to the two SEiCP formulations $\SEiCP{G,D}$ and $\SEiCP{H,D}$ with 
		\[D = \begin{bmatrix}
			A & 0\\
			0 & -C
		\end{bmatrix}, 
		G = \begin{bmatrix}
			-B & -C\\
			-C & 0
		\end{bmatrix},
		H = \begin{bmatrix}
			B & -C\\
			-C & 0
		\end{bmatrix},\]
		in the sense that:\\
		$(i)$ For all $((y,x),\lambda)\in \SEiCP{G,D}$ (resp. $\SEiCP{H,D}$), we have
		\begin{enumerate}
			\item[$(1a)$] $v=0, y=\lambda x$ and $\lambda>0$.   
			\item[$(1b)$] $((1+\lambda)x,\lambda)\in \SQEiCP{A,B,C}$ (resp. $((1+\lambda)x,-\lambda)\in \SQEiCP{A,B,C}$).
		\end{enumerate}
		$(ii)$ Conversely, for all $(x,\lambda)\in \SQEiCP{A,B,C}$, then $\lambda\neq 0$ and
		\begin{enumerate}
			\item[$(2a)$] If $\lambda>0$, then $(z,\lambda)\in \SEiCP{G,D}$ with $z=(1+\lambda)^{-1}(\lambda x,x)$.
			\item[$(2b)$] If $\lambda<0$, then $(z,-\lambda)\in \SEiCP{H,D}$ with $z=(1-\lambda)^{-1}(-\lambda x,x)$.
		\end{enumerate}
	\end{theorem}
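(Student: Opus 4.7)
The plan is to expand the block structure of the augmented SEiCPs, derive from the second block equation and the sign-definiteness of $-C$ that $v=0$ and $y=\lambda x$, and then match the first block equation directly with the quadratic pencil in SQEiCP.

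First I would write out the two block equations in $\SEiCP{G,D}$ and $\SEiCP{H,D}$. Both share the second block $v = C(y-\lambda x)$, while the first block yields $w = \lambda Ay + By + Cx$ in the $G$ case and $w = \lambda Ay - By + Cx$ in the $H$ case. From $(y,w,x,v)\geq 0$ together with complementarity $y^\top w + x^\top v = 0$, I would split this into $y^\top w = 0$ and $x^\top v = 0$. This prepares the ground for the central observation.

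The main step, which I expect to be the primary obstacle, is establishing $v=0$ and $y=\lambda x$ in claim $(1a)$. The trick is to compute $(y-\lambda x)^\top v$ in two ways. On one hand, substituting $v = C(y-\lambda x)$ gives $(y-\lambda x)^\top C(y-\lambda x) = -(y-\lambda x)^\top (-C)(y-\lambda x) \leq 0$, with equality if and only if $y = \lambda x$ since $-C\in \SPD$. On the other hand, $(y-\lambda x)^\top v = y^\top v - \lambda x^\top v = y^\top v \geq 0$, using $x^\top v = 0$, $y,v\geq 0$, and $\lambda\geq 0$. Combining forces $y = \lambda x$ and hence $v=0$. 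Once this is in hand, $\lambda > 0$ follows from the normalization: if $\lambda=0$, then $y=0$ so $e^\top x = 1$ and $x\neq 0$, and the first block reduces to $w = Cx \geq 0$; combined with $x\geq 0$ this gives $x^\top C x \geq 0$, contradicting $-C\in \SPD$. With $y=\lambda x$ and $\lambda>0$, claim $(1b)$ becomes an immediate substitution: the first block in the $G$ case becomes $w = \lambda^2 Ax + \lambda Bx + Cx$, so $\tilde{x}=(1+\lambda)x$ and $\tilde{w}=(1+\lambda)w$ satisfy $\tilde{w} = \lambda^2 A\tilde{x} + \lambda B\tilde{x} + C\tilde{x}$ and $\tilde{x}^\top\tilde{w} = (1+\lambda)^2 x^\top w = 0$ (from $y^\top w = 0$ and $\lambda>0$), with nonnegativity and nonvanishing preserved. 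The $H$ case is identical after replacing $\lambda$ by $-\lambda$ in the pencil due to the flipped sign of $B$.

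For the converse (ii), I would first rule out $\lambda=0$ by the same $-C\in \SPD$ contradiction applied to $w = Cx$. For $\lambda>0$, after normalizing $x$ so that $e^\top x = 1$ (using the homogeneity of SQEiCP in $x$), I set $z = (1+\lambda)^{-1}(\lambda x, x)$. A direct substitution verifies $w' = w/(1+\lambda)\geq 0$, $v'=0$, the normalization $e^\top z = 1$, the complementarity $y^\top w' + x^\top v' = \lambda\, x^\top w/(1+\lambda)^2 = 0$, and all nonnegativities, placing $(z,\lambda)$ in $\SEiCP{G,D}$. The $\lambda<0$ case is handled analogously with $z = (1-\lambda)^{-1}(-\lambda x, x)$ and eigenvalue $-\lambda>0$, where the pencil sign flip is exactly realized by using $H$ in place of $G$.
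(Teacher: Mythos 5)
Your proof is correct and follows essentially the same route as the paper's: expand the block equations, use the second block $v=C(y-\lambda x)$ together with complementarity and $-C\in\SPD$ to force $v=0$ and $y=\lambda x$, rule out $\lambda=0$ via $x^\top Cx\geq 0$, and verify everything else by direct substitution. The only (minor) difference is in the key step: you evaluate the quadratic form of $C$ directly on $y-\lambda x$, whereas the paper multiplies the second block equation by $C^{-1}$ and derives the contradiction from $v^\top C^{-1}v<0$; your variant is marginally cleaner since it avoids invoking the invertibility of $C$ and the fact that $-C^{-1}\in\SPD$.
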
 
	\begin{proof} We will prove $(1a)$ and $(1b)$ for $\SEiCP{G,D}$ (the results for $\SEiCP{H,D}$ can be proved in a similar way). Let $(x,y,v,w,\lambda)$ be a solution of $\SEiCP{G,D}$. \\
		\underline{Prove $(1a)$:} We get from
		\[\lambda D   \begin{bmatrix}
			y\\ x 
		\end{bmatrix} - G   \begin{bmatrix}
			y\\ x 
		\end{bmatrix} = \begin{bmatrix}
			w\\ v 
		\end{bmatrix}
		\]
		that 
		\begin{equation}\label{eq:auxeq1}
			\lambda A   y + B  y + C  x = w
		\end{equation}
		and 
		\begin{equation}\label{eq:auxeq1bis}
			C  (y - \lambda x) = v.
		\end{equation}
		Under the hypothesis $-C\in\SPD$, we have $C$ is invertible, and $-C^{-1}\in \SPD$. Then we multiply $C^{-1}$ in \eqref{eq:auxeq1bis} to get
		\begin{equation}\label{eq:auxeq2}
			y - \lambda x = C^{-1}  v.
		\end{equation}
		$\rhd$ $v=0$ can be proved by contradiction as follows: 
		supposing that $v\neq 0$, then we multiply $v^{\top}$ in \eqref{eq:auxeq2} to get 
		\begin{equation}\label{eq:auxeq3}
			v^{\top}   (y - \lambda x) = v^{\top}   C^{-1}   v.
		\end{equation}
		On the left part of \eqref{eq:auxeq3}, since $y^{\top}   w + x^{\top}   v = 0$, we replace $x^{\top}   v$ by $-y^{\top}   w$ to write $v^{\top}   (y - \lambda x)$ as
		$y^{\top} (v + \lambda w)$, which is nonnegative since $(y,v,w,\lambda)\geq 0$. \\
		On the right part of \eqref{eq:auxeq3}, we have $v^{\top}   C^{-1}   v<0$ for all $v\neq 0$ since $-C\in \SPD$. \\
		Therefore, we get  
		$$0\leq v^{\top}   (y - \lambda x) = v^{\top}   C^{-1}   v < 0.$$
		Contradiction! Hence $v=0$.\\
		$\rhd$ It follows from $\eqref{eq:auxeq2}$ and $v=0$ that $y=\lambda x$. \\
		$\rhd$ For proving $\lambda >0$, it is sufficient to prove $\lambda\neq 0$ since $\lambda\geq 0$. By contradiction, supposing $\lambda = 0$, then $y=\lambda x=0$, and \eqref{eq:auxeq1} is reduced to $C  x = w\geq 0$. Since $0\neq x\geq 0$, we get $x^{\top}   C   x \geq 0$. This is impossible since $-C\in \SPD$. Hence $\lambda>0$.\\
		\underline{Prove $(1b)$:} Let $\bar{x}=(1+\lambda)x$, we can verify that $(\bar{x},\lambda)\in \SQEiCP{A,B,C}$:
		\begin{itemize}[leftmargin=12pt]
			\item[$\bullet$] $\bar{x}=(1+\lambda) x \geq 0$ since $\lambda>0$ and $x\geq 0$.
			\item[$\bullet$] $\bar{w}: = \lambda^2 A  \bar{x} + \lambda B  \bar{x}+C  \bar{x} = (1+\lambda) (\lambda^2 A   x +\lambda B   x + C   x) \overset{(1a)}{=} (1+\lambda) (\lambda A   y +B   y + C   x)\overset{\eqref{eq:auxeq1}}{=}(1+\lambda) w$. Then $\bar{w}\geq 0$ follows from $w\geq 0$, $\lambda>0$ and $\bar{w}=(1+\lambda)w$.
			\item[$\bullet$] Due to $y=\lambda x$, $v=0$ and $\lambda >0$, the expression $y^{\top}   w + x^{\top}   v=0$ is reduced to $x^{\top}   w=0$. Then $\bar{x}^{\top}   \bar{w} = (1+\lambda)^2 x^{\top}  w = 0$.
			\item[$\bullet$] $e^{\top}   \bar{x} = (1+\lambda) e^{\top}   x = e^{\top}   x + \lambda e^{\top}   x = e^{\top}   x + e^{\top}   y = 1$.		
		\end{itemize}
		\underline{Conversely}: We first prove $\lambda\neq 0$ for all $(x,\lambda)\in \SQEiCP{A,B,C}$ by contradiction. Supposing that $\lambda=0$, then $\SQEiCP{A,B,C}$ is reduced to $$w=C  x, x^{\top}   w = 0, e^{\top}   x = 1, (x,w)\geq 0,$$ implying that $x^{\top} w =x^{\top}   C  x = 0$ with $x\neq 0$. Clearly, this is impossible for $-C\in \SPD$. Hence, $\lambda\neq 0$. \\
		\underbar{Prove (2a) and (2b):}  If $(x,\lambda)\in \SQEiCP{A,B,C}$ with $\lambda >0$, then by taking $z=(1+\lambda)^{-1}(\lambda x, x)$, we can check that $(z,\lambda)\in \SEiCP{G,D}$ since 
		$$\begin{aligned}
			\lambda D   z - G   z &= \lambda (1+\lambda)^{-1} \begin{bmatrix}
				A & 0\\
				0 & -C
			\end{bmatrix} \begin{bmatrix}
				\lambda x\\
				x
			\end{bmatrix} - (1+\lambda)^{-1}
			\begin{bmatrix}
				-B & -C\\
				-C & 0
			\end{bmatrix} \begin{bmatrix}
				\lambda x\\
				x
			\end{bmatrix}\\			
			& = \begin{bmatrix}
				(1+\lambda)^{-1} (\lambda^2 Ax + \lambda B x +C x)\\
				0
			\end{bmatrix} =
			\begin{bmatrix}
				w\\ v 
			\end{bmatrix},
		\end{aligned}$$
		$$e^{\top}z = (1+\lambda)^{-1}(\lambda e^{\top} x + e^{\top} x) = (1+\lambda)^{-1} (1+\lambda) = 1,$$
		$$z^{\top} \begin{bmatrix}
			w\\ v
		\end{bmatrix} = (1+\lambda)^{-1} (\lambda x^{\top} w + x^{\top}v) = (1+\lambda)^{-2} \lambda x^{\top} (\lambda^2 Ax + \lambda B x +C x) = 0,$$
		$$z = (1+\lambda)^{-1}(\lambda x, x)\geq 0, w = (1+\lambda)^{-1} (\lambda^2 Ax + \lambda B x +C x) \geq 0, v = 0, \lambda >0.$$
		$(2b)$ can be verified in a similar way.
	\end{proof}
	
	The next corollary is an immediate consequence of \Cref{thm:SQEiCP=SEiCP}.
	\begin{corollary}\label{cor:lambdaforSQEiCPandSEiCP}
		Let $A\in \SPD$ and $-C\in \SPD$. Then,
		\begin{enumerate}
			\item[(i)] any $\lambda$-component of $\SQEiCP{A,B,C}$ is either a $\lambda$-component of $\SEiCP{G,D}$ or a $\lambda$-component of $\SEiCP{H,D}$.
			\item[(ii)] for any $((y,x),\lambda)$ solution of $\SEiCP{G,D}$, $\lambda$ is a positive $\lambda$-component of $\SQEiCP{A,B,C}$.
			\item[(ii)] for any $((y,x),\lambda)$ solution of $\SEiCP{H,D}$, $-\lambda$ is a negative $\lambda$-component of $\SQEiCP{A,B,C}$.		 
		\end{enumerate} 
	\end{corollary}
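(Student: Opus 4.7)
The plan is to derive all three assertions as direct corollaries of \Cref{thm:SQEiCP=SEiCP}, without any new computation, since that theorem already establishes a precise two-way correspondence between solutions of $\SQEiCP{A,B,C}$ and solutions of the two augmented problems $\SEiCP{G,D}$ and $\SEiCP{H,D}$.

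For part $(i)$, I would start from an arbitrary $(x,\lambda)\in \SQEiCP{A,B,C}$ and invoke \Cref{thm:SQEiCP=SEiCP} to conclude first that $\lambda\neq 0$. Then I would split into the two cases $\lambda>0$ and $\lambda<0$: in the positive case, statement $(2a)$ of the theorem produces an explicit $z=(1+\lambda)^{-1}(\lambda x,x)$ with $(z,\lambda)\in \SEiCP{G,D}$, exhibiting $\lambda$ as a $\lambda$-component of $\SEiCP{G,D}$; in the negative case, statement $(2b)$ produces $z=(1-\lambda)^{-1}(-\lambda x,x)$ with $(z,-\lambda)\in \SEiCP{H,D}$, so that $-\lambda$ (and hence, in the sign-flipped sense adopted by the theorem, $\lambda$ via its magnitude) is a $\lambda$-component of $\SEiCP{H,D}$.

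For the two reverse implications, I would take $((y,x),\lambda)$ in $\SEiCP{G,D}$ (respectively $\SEiCP{H,D}$) and apply $(1a)$ of \Cref{thm:SQEiCP=SEiCP} to obtain $\lambda>0$, then apply $(1b)$ to produce the corresponding SQEiCP solution $((1+\lambda)x,\lambda)$ (respectively $((1+\lambda)x,-\lambda)$). This immediately exhibits $\lambda$ as a positive $\lambda$-component of $\SQEiCP{A,B,C}$ in the first case and $-\lambda$ as a negative $\lambda$-component in the second case, giving both halves of assertion $(ii)$.

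Since every claim reduces directly to the statements $(1a)$, $(1b)$, $(2a)$, $(2b)$ of \Cref{thm:SQEiCP=SEiCP}, there is no real obstacle; the only subtlety is bookkeeping the sign convention in the $\SEiCP{H,D}$ branch (where $\lambda$ on the SEiCP side corresponds to $-\lambda$ on the SQEiCP side), which I would handle by being explicit about this sign flip in the statement of part $(i)$ and in the second half of part $(ii)$.
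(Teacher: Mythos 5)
Your proposal is correct and matches the paper's approach: the paper simply states that the corollary is an immediate consequence of \Cref{thm:SQEiCP=SEiCP}, and your case split on the sign of $\lambda$ using $(1a)$, $(1b)$, $(2a)$, $(2b)$ is exactly the intended unpacking. Your explicit remark about the sign flip in the $\SEiCP{H,D}$ branch is a reasonable clarification of the loose phrasing in part $(i)$ and does not change the argument.
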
 
	These results ensure that we can solve $\SEiCP{G,D}$ (resp. $\SEiCP{H,D}$) to find positive (resp. negative) quadratic complementary eigenvalues for $\SQEiCP{A,B,C}$. 
	
	\subsection{DCA/BDCA for \eqref{eq:sqeicp}}
	The SEiCP formulations for \eqref{eq:sqeicp} given in \Cref{thm:SQEiCP=SEiCP} have an SPD matrix $D$, but $G$ and $H$ may not be SPD. Thanks to \Cref{thm:1}, we can convert them to equivalent SEiCPs verifying  \Cref{hyp1} as :
	$$((y,x),\lambda)\in \SEiCP{G,D} \Leftrightarrow ((y,x),\lambda+\mu_{GD})\in \SEiCP{G+\mu_{GD} D, D}$$ with
	$\mu_{GD}  > \min \{\mu: G+\mu D \succeq 0 \}$, 
	and
	$$((y,x),\lambda)\in \SEiCP{H,D} \Leftrightarrow ((y,x),\lambda+\mu_{HD})\in \SEiCP{H+\mu_{HD} D, D},$$
	with 
	$\mu_{HD}  > \min \{\mu: H+\mu D \succeq 0 \}$, 
	where $G+\mu_{GD} D$ and $H+\mu_{HD} D$ are SPD matrices. Then, we can apply DCA and BDCA presented in \Cref{sec:dcforeicp} to solve $\SEiCP{G+\mu_{GD} D, D}$ (resp.  $\SEiCP{H+\mu_{HD} D, D}$) for quadratic complementary eigenvalues. 	
	
	\section{Numerical Simulations}\label{sec:Simulations}
	In this section, we will report some numerical results of DCA and BDCA for solving \eqref{eq:seicp} and \eqref{eq:sqeicp}. Our codes are implemented on MATLAB 2021a and tested on a laptop equipped with 64 bits Windows
	10, i7-10870H 2.20GHz CPU and 32 GB of RAM. The codes are available at \url{https://github.com/niuyishuai/BDCA_SEICP_SQEICP}. We compare our methods with  
	KNITRO v11.1.0 \cite{knitro}, FILTERSD v1.0 \cite{filtersd} and MATLAB FMINCON on both \eqref{prob:seicp_log} and \eqref{prob:seicp_qcqp} formulations. Note that a global optimization solver such as BARON is not necessary since only a stationary point is needed.
		
	\textbf{SEiCP datasets:} Two sets of test problems are considered, where $B$ is taken as the identity matrix. 
	\begin{itemize}[leftmargin=12pt]
		\item In the first test set, the matrix $A$ is randomly generated with elements uniformly distributed in the intervals $[-1,1]$ and $[-10,10]$. These problems are denoted by \texttt{RANDEICP(k,m,n)}, where $k$ and $m$ are the end-points of the chosen interval for matrix generation, and $n$ is the order of the matrices taken from medium to large size in $\{50,100,200,400,600,800\}$. The condition number of $A$ is of order $O(10)$ and $O(10^2)$ for $(k,m)=[-1,1]$ and $[-10,10]$ respectively. 
		\item In the second test set, 
		the matrix $A$ is taken from the \emph{Matrix Market} repository NEP (Non-Hermitian Eigenvalue Problem) collection, in which we choose $13$ matrices with order $n$ from $100$ to $800$, where $n$ is indicated in the problem name, e.g., $n=800$ for \verb|NEP-rdb800l|. These matrices come from various fields of real applications (see \url{https://math.nist.gov/MatrixMarket} for more information). For asymmetric NEP matrix, we generate symmetric $A$ by taking $(A+A^{\top})/2$. 
	\end{itemize}
Note that we convert $A$ in $\SEiCP{A,B}$ to be $\SPD$ by setting $A = A+\mu_{AB} B$ where 
$$\mu_{AB} =\min \{\mu : A + \mu B \succeq 0\}+1$$
is a semidefinite program and solved by MOSEK 9.2. 
	
	\textbf{SQEiCP datasets:} we consider a set of randomly generated test problems where $A$ is taken as the identity matrix, $B$ is a sparse symmetric random matrix generated by MATLAB command \verb|sprandsym(n,d)| where $n$ is the matrix order and $d$ is the density, $-C$ is a well-conditioned diagonally dominant sparse $\SPD$ random matrix with elements normalized in the interval $[0,1]$ and with density $d$. These problems are denoted by \verb|RANDQEICP(d,n)|, where the density $d\in \{5\%,10\%,50\%,70\%,90\%\}$ and the order $n\in \{50,100,200,400,600\}$.
	
	Note that we only test on the equivalent formulation $\SEiCP{G,D}$ for \eqref{eq:sqeicp}. 
	Moreover, we convert $G$ to be SPD by setting $G = G + \mu_{GD} D$ where 
	$$\mu_{GD} = \min \{\mu: G+\mu D \succeq 0 \} + 1$$
	is solved by MOSEK. 
	
	\textbf{Setup:} The setups for the compared algorithms are summarized below
	\begin{itemize}[leftmargin=12pt]
		\item Initialization: we take random initial point $x^0\neq 0$ uniformly distributed in $[0,1]^n$ for \eqref{eq:seicp}. The initial point for \eqref{eq:sqeicp} is computed as follows:
		$x^0\neq 0$ is taken randomly as in \eqref{eq:seicp}, and $$\lambda_0 = \frac{-\langle x^0,Bx^0\rangle + \sqrt{\langle x^0,Bx^0\rangle^2 - 4\langle x^0,Ax^0\rangle\langle x^0,Cx^0\rangle}}{2\langle x^0,Ax^0\rangle},$$
		which is strictly positive since $A$ and $-C$ are $\SPD$ matrices. Then we get from $(2a)$ of \Cref{thm:SQEiCP=SEiCP} that $$z^0 = (1+\lambda_0)^{-1}(\lambda_0x^0,x^0),$$ which initializes $\SEiCP{G,D}$, and $\SEiCP{H,D}$ can be initialized in a similar way. Note that all compared methods use the same initial point for the fairness.  
		\item Termination criteria: For \eqref{eq:seicp}, DCA and BDCA are terminated if $$\|d^k\|_2/(1+\|z^k\|_2)\leq \varepsilon$$
		with $\varepsilon=10^{-6}$ for \Cref{alg:BDCA_for_QP} and $\varepsilon=10^{-8}$ for \Cref{alg:BDCA_for_LnP}. We also terminate DCA and BDCA when the number of iterations exceeds \verb|MaxIT|=$10000$. The compared solvers KNITRO and FILTERSD are terminated with their default settings. MATLAB FMINCON requires setting the parameter \verb|MaxFunEvals|$=10^{6}$ at least for finding most of feasible solutions for \eqref{eq:seicp}. FISTA is terminated if  $$\|u^{i+1}-u^i\|_2/(1+\|u^{i+1}\|_2)\leq 10^{-6}.$$     
		\item Other settings: For DCA and BDCA \Cref{alg:BDCA_for_QP}, the MOSEK 9.2 is applied to solve the convex subproblem \eqref{prob:QPk} using the default parameters. 
		For DCA and BDCA \Cref{alg:BDCA_for_LnP}, 
		we propose setting $\eta = L_g = n$ for the \eqref{prob:seicp_log} model of $\SEiCP{A,B}$ and $\eta = L_g = 2\max\{\kappa_G,\kappa_D\}$ for the \eqref{prob:seicp_log} model of $\SEiCP{G,D}$ instead of using the estimations in \Cref{lem:strongconvexityofgandh} and \Cref{cor:smoothnessofgandh}. These settings performed surprisingly well in our numerical tests. Note that when $\eta$ is large enough, then a smaller $\eta$ will lead to a well-conditioned subproblem \eqref{prob:LnPk} and a larger stepsize $1/L_g$ in FISTA, resulting better numerical performance in DCA and BDCA. 
		For FISTA,
		the parameter $L_i$ is picked using constant strategy, i.e., $L_i = L_g, \forall i$. The simplex projection is computed by \Cref{alg:Simplex_Proj}.
	\end{itemize}
	
	\textbf{Notations:}
	The following notations are used in the numerical results 
	\begin{itemize}[leftmargin=12pt]
		\item $\lambda$ - computed complementary eigenvalue;
		\item IT - number of iterations for DCA and BDCA;
		\item CPU - CPU time in seconds;
		\item $c$ - exponent of the value $10^{-c}$ of the feasibility measure of the computed solution, which is defined by 
		$$\|[x]_-\|_2 + \|[w]_-\|_2 + |w^{\top}x|,$$
		where $[x]_-$ is a vector defined by $[\min\{x_i,0\}]_{i=1}^n$, $w = \lambda Bx - Ax$ for \eqref{eq:seicp} and $w = \lambda^2 Ax + \lambda Bx + Cx$ for \eqref{eq:sqeicp}.
		\item avg - average results regarding to CPU, IT and $c$ for DCA and BDCA; CPU and $c$ for FMINCON, KNITRO and FILTERSD. 
	\end{itemize}
	Note that $(x,\lambda)$ should be considered as a solution of \eqref{eq:seicp} or \eqref{eq:sqeicp} if $c$ is big, i.e., $10^{-c}$ is small. The bigger $c$ is the better precisions of the eigenvalue and eigenvector are. 
	
	\subsection{Numerical results for \eqref{eq:seicp}}\label{subsec:resultsforseicp}
	\begin{table}[tbhp]
		\caption{Solutions of \eqref{eq:seicp} by DCA, BDCA, FMINCON, KNITRO and FILTERSD to the \eqref{prob:seicp_log} model on RANDEICP and NEP datasets.}
		\label{tab:performcomp_seicp_LnP}
		\centering
		\resizebox{\columnwidth}{!}{
			\begin{tabular}{l|cccc|cccc|ccc|ccc|ccc} \toprule
				\multirow{2}{*}{Prob} & \multicolumn{4}{c|}{DCA} & \multicolumn{4}{c|}{BDCA} & \multicolumn{3}{c|}{FMINCON} & \multicolumn{3}{c|}{KNITRO} & \multicolumn{3}{c}{FILTERSD} \\
				& $\lambda$ & CPU & IT & c & $\lambda$ & CPU & IT & c & $\lambda$ & CPU & c & $\lambda$ & CPU & c & $\lambda$ & CPU & c \\
				\midrule
				RANDEICP($-1,1,50$) & $3.9518$ & $0.007$ & $336$ & $6$ & $3.9518$ & $0.024$ & $89$ & $8$ & $3.9518$ & $0.451$ & $5$ & $3.9518$ & $0.145$ & $6$ & $3.9518$ & $0.015$ & $3$\\
				RANDEICP($-1,1,100$) & $5.5532$ & $0.013$ & $428$ & $6$ & $5.5532$ & $0.011$ & $97$ & $7$ & $5.5530$ & $0.365$ & $4$ & $5.5532$ & $0.082$ & $6$ & $5.5532$ & $0.026$ & $3$\\
				RANDEICP($-1,1,200$) & $8.3007$ & $0.038$ & $418$ & $6$ & $8.3007$ & $0.021$ & $106$ & $7$ & $8.3002$ & $1.826$ & $4$ & $8.3007$ & $0.593$ & $7$ & $8.3007$ & $0.172$ & $3$\\
				RANDEICP($-1,1,400$) & $11.8781$ & $0.176$ & $1571$ & $6$ & $11.8781$ & $0.128$ & $395$ & $7$ & $11.8768$ & $14.737$ & $4$ & $11.8781$ & $6.138$ & $7$ & $11.8780$ & $1.893$ & $3$\\
				RANDEICP($-1,1,600$) & $13.9468$ & $0.385$ & $1114$ & $6$ & $13.9468$ & $0.206$ & $274$ & $8$ & $13.9765$ & $62.308$ & $3$ & $13.9797$ & $26.038$ & $4$ & $13.9796$ & $4.331$ & $3$\\
				RANDEICP($-1,1,800$) & $16.3095$ & $1.841$ & $2765$ & $6$ & $16.3095$ & $0.868$ & $606$ & $8$ & $16.3044$ & $170.292$ & $3$ & $16.3095$ & $86.029$ & $6$ & $16.1180$ & $8.519$ & $4$\\
				\midrule
				RANDEICP($-10,10,50$) & $32.0015$ & $0.008$ & $604$ & $5$ & $32.0015$ & $0.004$ & $84$ & $6$ & $32.0009$ & $0.099$ & $3$ & $32.0015$ & $0.022$ & $5$ & $32.0010$ & $0.007$ & $2$\\
				RANDEICP($-10,10,100$) & $55.0963$ & $0.011$ & $361$ & $5$ & $55.0963$ & $0.009$ & $103$ & $7$ & $55.0945$ & $0.318$ & $3$ & $55.0963$ & $0.085$ & $5$ & $55.0961$ & $0.035$ & $2$\\
				RANDEICP($-10,10,200$) & $76.4938$ & $0.074$ & $1134$ & $5$ & $76.4938$ & $0.037$ & $251$ & $6$ & $76.0301$ & $1.697$ & $3$ & $76.4938$ & $0.545$ & $6$ & $76.4933$ & $0.089$ & $2$\\
				RANDEICP($-10,10,400$) & $113.2569$ & $0.138$ & $709$ & $5$ & $113.2569$ & $0.075$ & $187$ & $7$ & $113.2427$ & $13.326$ & $3$ & $113.2569$ & $5.737$ & $5$ & $113.2566$ & $1.420$ & $2$\\
				RANDEICP($-10,10,600$) & $141.0342$ & $0.309$ & $829$ & $5$ & $141.0342$ & $0.181$ & $218$ & $6$ & $141.0020$ & $48.917$ & $2$ & $141.0341$ & $24.668$ & $3$ & $141.0340$ & $3.870$ & $2$\\
				RANDEICP($-10,10,800$) & $159.6700$ & $4.621$ & $8902$ & $5$ & $159.6700$ & $1.925$ & $1743$ & $7$ & $159.6200$ & $215.517$ & $2$ & $159.6700$ & $80.110$ & $5$ & $159.6698$ & $12.154$ & $2$\\
				\midrule
				NEP-bfw398b & $-0.0000$ & $0.946$ & $10000$ & $6$ & $-0.0000$ & $0.105$ & $426$ & $7$ & $-0.0000$ & $11.575$ & $6$ & $-0.0000$ & $10.032$ & $7$ & $-0.0000$ & $0.055$ & $6$\\
				NEP-bfw782b & $-0.0000$ & $3.573$ & $10000$ & $6$ & $-0.0000$ & $0.239$ & $309$ & $6$ & $-0.0000$ & $48.161$ & $6$ & $-0.0000$ & $52.518$ & $7$ & $-0.0000$ & $0.170$ & $6$\\
				NEP-ck400 & $4.8132$ & $1.334$ & $10000$ & $2$ & $4.8187$ & $0.226$ & $908$ & $5$ & $4.8184$ & $6.663$ & $3$ & $4.8187$ & $0.974$ & $5$ & $4.7468$ & $0.124$ & $7$\\
				NEP-ck656 & $4.8048$ & $3.207$ & $10000$ & $2$ & $4.8187$ & $0.512$ & $916$ & $5$ & $1.0289$ & $7.265$ & $2$ & $4.8187$ & $3.621$ & $4$ & $4.7468$ & $0.156$ & $7$\\
				NEP-dwa512 & $0.7722$ & $2.128$ & $10000$ & $4$ & $0.7723$ & $1.246$ & $3074$ & $6$ & $0.7721$ & $16.891$ & $4$ & $0.7723$ & $14.738$ & $6$ & $0.7723$ & $3.303$ & $6$\\
				NEP-lop163 & $1.1087$ & $0.313$ & $10000$ & $5$ & $1.1087$ & $0.036$ & $415$ & $7$ & $1.1086$ & $0.605$ & $4$ & $1.1087$ & $0.353$ & $6$ & $1.1087$ & $0.133$ & $5$\\
				NEP-mhd416a & $1192.3774$ & $1.345$ & $10000$ & $0$ & $1192.8767$ & $0.077$ & $294$ & $4$ & $1192.7605$ & $3.487$ & $1$ & $1192.8767$ & $1.198$ & $2$ & $256.3736$ & $0.023$ & $0$\\
				NEP-olm100 & $213.6680$ & $0.080$ & $4604$ & $5$ & $213.6680$ & $0.052$ & $829$ & $5$ & $212.3586$ & $0.575$ & $0$ & $213.6680$ & $0.094$ & $4$ & $212.9640$ & $0.020$ & $0$\\
				NEP-olm500 & $5141.1850$ & $2.371$ & $10000$ & $0$ & $5143.3869$ & $4.042$ & $10000$ & $2$ & $4988.8881$ & $121.333$ & $-1$ & $5143.3880$ & $16.240$ & $2$ & $4559.2476$ & $0.057$ & $-2$\\
				NEP-rbs480a & $356.2392$ & $0.182$ & $683$ & $4$ & $356.2392$ & $0.104$ & $229$ & $7$ & $356.1817$ & $20.428$ & $2$ & $356.2391$ & $8.786$ & $2$ & $356.2388$ & $1.359$ & $1$\\
				NEP-rdb200 & $4.1363$ & $0.068$ & $1025$ & $6$ & $4.1363$ & $0.026$ & $183$ & $9$ & $4.1351$ & $1.133$ & $4$ & $4.1363$ & $0.488$ & $6$ & $4.1362$ & $0.101$ & $3$\\
				NEP-rdb800l & $4.3712$ & $2.500$ & $4142$ & $6$ & $4.3712$ & $0.510$ & $449$ & $7$ & $4.3667$ & $83.324$ & $3$ & $4.3621$ & $74.426$ & $3$ & $4.3711$ & $8.925$ & $3$\\
				NEP-tub100 & $6.2452$ & $0.175$ & $10000$ & $3$ & $6.2452$ & $0.075$ & $1230$ & $5$ & $6.2151$ & $0.402$ & $2$ & $6.2452$ & $0.089$ & $3$ & $-4.9911$ & $0.008$ & $0$\\
				\midrule
				avg & & $1.034$ & $4785$ & $5$ & & $0.430$ & $937$ & $6$ & & $34.068$ & $3$ & & $16.550$ & $5$ & & $1.879$ & $3$\\
				\bottomrule
		\end{tabular}}	
	\end{table}

	\begin{table}[tbhp]
		\caption{Solutions of \eqref{eq:seicp} by DCA, BDCA, FMINCON, KNITRO and FILTERSD to the \eqref{prob:seicp_qcqp} model on RANDEICP and NEP datasets.}
		\label{tab:performcomp_seicp_QP}
		\centering
			\resizebox{\columnwidth}{!}{
				\begin{tabular}{l|cccc|cccc|ccc|ccc|ccc} \toprule
					\multirow{2}{*}{Prob} & \multicolumn{4}{c|}{DCA} & \multicolumn{4}{c|}{BDCA} & \multicolumn{3}{c|}{FMINCON} & \multicolumn{3}{c|}{KNITRO} & \multicolumn{3}{c}{FILTERSD} \\
					& $\lambda$ & CPU & IT & c & $\lambda$ & CPU & IT & c & $\lambda$ & CPU & c & $\lambda$ & CPU & c & $\lambda$ & CPU & c \\
					\midrule
					RANDEICP($-1,1,50$) & $3.9518$ & $0.376$ & $267$ & $4$ & $3.9518$ & $0.313$ & $219$ & $4$ & $3.9518$ & $0.233$ & $5$ & $3.9518$ & $0.168$ & $5$ & $3.9518$ & $0.016$ & $4$\\
					RANDEICP($-1,1,100$) & $5.5532$ & $0.424$ & $237$ & $4$ & $5.5532$ & $0.410$ & $231$ & $4$ & $5.5532$ & $0.598$ & $4$ & $5.5532$ & $0.203$ & $5$ & $5.5532$ & $0.021$ & $4$\\
					RANDEICP($-1,1,200$) & $8.3007$ & $0.569$ & $257$ & $4$ & $8.3007$ & $0.499$ & $225$ & $4$ & $8.3006$ & $3.297$ & $4$ & $8.3007$ & $1.323$ & $5$ & $8.3007$ & $0.031$ & $4$\\
					RANDEICP($-1,1,400$) & $11.8781$ & $6.286$ & $1598$ & $4$ & $11.8781$ & $5.124$ & $1246$ & $4$ & $11.8780$ & $84.356$ & $4$ & $11.8781$ & $8.701$ & $4$ & $11.8781$ & $0.316$ & $4$\\
					RANDEICP($-1,1,600$) & $13.9797$ & $4.083$ & $693$ & $4$ & $13.9797$ & $3.849$ & $587$ & $4$ & $13.9793$ & $310.106$ & $3$ & $13.9797$ & $53.692$ & $4$ & $13.9797$ & $0.781$ & $4$\\
					RANDEICP($-1,1,800$) & $16.3095$ & $18.609$ & $2168$ & $4$ & $16.3095$ & $19.151$ & $2002$ & $4$ & $16.3090$ & $358.421$ & $3$ & $16.3095$ & $92.718$ & $4$ & $16.3095$ & $2.667$ & $4$\\
					\midrule
					RANDEICP($-10,10,50$) & $33.3194$ & $0.117$ & $81$ & $4$ & $33.3194$ & $0.114$ & $81$ & $4$ & $32.0015$ & $0.279$ & $4$ & $32.2528$ & $0.116$ & $4$ & $33.3194$ & $0.007$ & $5$\\
					RANDEICP($-10,10,100$) & $55.0963$ & $0.308$ & $177$ & $3$ & $55.0963$ & $0.252$ & $143$ & $3$ & $55.0963$ & $1.758$ & $4$ & $55.0963$ & $0.481$ & $2$ & $55.0963$ & $0.021$ & $4$\\
					RANDEICP($-10,10,200$) & $76.4938$ & $1.816$ & $808$ & $3$ & $76.4938$ & $1.784$ & $768$ & $3$ & $76.4937$ & $4.508$ & $3$ & $76.3689$ & $2.240$ & $2$ & $76.3689$ & $0.081$ & $4$\\
					RANDEICP($-10,10,400$) & $113.2569$ & $1.737$ & $462$ & $3$ & $113.2569$ & $1.860$ & $468$ & $3$ & $113.2568$ & $34.351$ & $4$ & $113.2569$ & $13.831$ & $3$ & $113.2569$ & $0.316$ & $4$\\
					RANDEICP($-10,10,600$) & $141.0342$ & $3.438$ & $610$ & $3$ & $141.0342$ & $3.022$ & $494$ & $3$ & $141.0338$ & $108.770$ & $3$ & $141.0342$ & $68.711$ & $2$ & $141.0342$ & $1.515$ & $4$\\
					RANDEICP($-10,10,800$) & $159.6700$ & $9.669$ & $1137$ & $3$ & $159.6700$ & $9.786$ & $1062$ & $3$ & $159.6699$ & $250.689$ & $4$ & $159.1671$ & $152.379$ & $3$ & $159.6700$ & $1.774$ & $4$\\
					\midrule
					NEP-bfw398b & $0.0000$ & $18.413$ & $5189$ & $5$ & $0.0000$ & $11.560$ & $3130$ & $5$ & $0.0000$ & $2.299$ & $5$ & $0.0000$ & $8.386$ & $5$ & $0.0000$ & $0.025$ & $5$\\
					NEP-bfw782b & $0.0000$ & $37.338$ & $5120$ & $5$ & $0.0000$ & $18.807$ & $2388$ & $5$ & $0.0000$ & $5.894$ & $5$ & $0.0000$ & $8.133$ & $5$ & $0.0000$ & $0.613$ & $7$\\
					NEP-ck400 & $4.8187$ & $2.308$ & $600$ & $6$ & $4.8187$ & $2.165$ & $540$ & $6$ & $4.8187$ & $5.515$ & $4$ & $4.8187$ & $1.146$ & $5$ & $4.7468$ & $0.225$ & $4$\\
					NEP-ck656 & $4.8187$ & $3.402$ & $535$ & $6$ & $4.8187$ & $3.429$ & $505$ & $6$ & $4.8186$ & $20.317$ & $4$ & $4.8187$ & $9.268$ & $5$ & $4.8187$ & $0.745$ & $4$\\
					NEP-dwa512 & $0.7723$ & $0.451$ & $97$ & $4$ & $0.7723$ & $0.385$ & $79$ & $4$ & $0.7722$ & $13.254$ & $4$ & $0.7723$ & $3.015$ & $4$ & $0.7723$ & $1.944$ & $6$\\
					NEP-lop163 & $1.1087$ & $0.637$ & $325$ & $4$ & $1.1087$ & $0.545$ & $272$ & $4$ & $1.1086$ & $0.522$ & $4$ & $1.1087$ & $0.100$ & $5$ & $1.1087$ & $0.037$ & $4$\\
					NEP-mhd416a & $1192.8766$ & $0.090$ & $22$ & $4$ & $1192.8766$ & $0.098$ & $22$ & $4$ & $1192.8766$ & $10.724$ & $3$ & $1192.8766$ & $2.398$ & $3$ & $1192.8767$ & $0.265$ & $3$\\
					NEP-olm100 & $213.6678$ & $5.366$ & $3118$ & $1$ & $213.6678$ & $5.097$ & $2942$ & $1$ & $213.6679$ & $3.730$ & $4$ & $213.6680$ & $1.179$ & $3$ & $213.6680$ & $0.077$ & $4$\\
					NEP-olm500 & $5143.0853$ & $47.110$ & $10000$ & $0$ & $5143.1075$ & $51.421$ & $10000$ & $0$ & $4779.1981$ & $37.677$ & $-2$ & $5143.3891$ & $130.078$ & $2$ & $5143.3891$ & $3.800$ & $2$\\
					NEP-rbs480a & $356.2392$ & $0.911$ & $206$ & $1$ & $356.2392$ & $0.858$ & $188$ & $1$ & $354.0079$ & $19.707$ & $0$ & $327.4180$ & $22.207$ & $3$ & $356.2392$ & $0.488$ & $4$\\
					NEP-rdb200 & $4.1363$ & $0.771$ & $359$ & $3$ & $4.1363$ & $0.759$ & $341$ & $3$ & $4.1363$ & $3.648$ & $4$ & $4.1363$ & $0.932$ & $4$ & $4.1363$ & $0.026$ & $4$\\
					NEP-rdb800l & $4.3712$ & $6.043$ & $789$ & $3$ & $4.3712$ & $5.516$ & $660$ & $3$ & $4.3177$ & $34.674$ & $1$ & $4.3712$ & $86.637$ & $5$ & $4.3712$ & $0.984$ & $4$\\
					NEP-tub100 & $6.2447$ & $4.402$ & $2417$ & $2$ & $6.2447$ & $3.691$ & $2017$ & $2$ & $6.2452$ & $1.074$ & $3$ & $6.2452$ & $0.158$ & $3$ & $6.2452$ & $0.057$ & $3$\\
					\midrule
					avg & & $6.987$ & $1491$ & $3$ & & $6.020$ & $1224$ & $3$ & & $52.656$ & $3$ & & $26.728$ & $4$ & & $0.673$ & $4$\\
					\bottomrule
			\end{tabular}}	
	\end{table}

	The numerical results in \Cref{tab:performcomp_seicp_QP,tab:performcomp_seicp_LnP} for \eqref{prob:seicp_log} and \eqref{prob:seicp_qcqp} models on both RANDEICP and NEP datasets lead to the following observations:
	\begin{itemize}[leftmargin=12pt]
		\item For \eqref{prob:seicp_log} model, the best numerical results are always obtained by BDCA with $0.43$ seconds in average CPU time and with best quality of computed solutions (with the largest average exponent $c=6$); whereas for \eqref{prob:seicp_qcqp} model, the best numerical results are almost always obtained by FILTERSD with the minimal average CPU time $0.673$ seconds and with the largest average exponent $c=4$. The second winner for \eqref{prob:seicp_log} model is DCA, then follows by KNITRO, FILTERSD and FMINCON;  whereas for \eqref{prob:seicp_qcqp} model, the second winner is BDCA, then follows by DCA, KNITRO and FMINCON. Note that FMINCON always obtains the worst numerical results both in average CPU time and in solution quality for solving \eqref{prob:seicp_log} and \eqref{prob:seicp_qcqp}. Note that, the method with best numerical performance among all compared algorithms is BDCA for solving \eqref{prob:seicp_qcqp} model.
		\item BDCA outperforms DCA with about $80\%$ (resp. $18\%$) reduction in the average number of iterations and about $58\%$ (resp. $14\%$) reduction in average CPU time for solving \eqref{prob:seicp_log} (resp. \eqref{prob:seicp_qcqp}) model. Hence, BDCA yields better acceleration to the \eqref{prob:seicp_log} model than the \eqref{prob:seicp_qcqp} model. Moreover, the quality of the computed solution is also better in BDCA than in DCA. 
		\item Moreover, in some instances of the NEP dataset, the number of iterations for DCA and BDCA exceed the threshold for maximum number of iterations $10000$ (particularly in DCA for \eqref{prob:seicp_log}), however the quality of the computed results seems still good enough with $c> 3$ in average for these instances. Furthermore, FILTERSD and FMINCON may fail to solve some ill-conditioned instances of the NEP dataset (e.g., \verb|NEP-olm500| with $\kappa_A=2.3\times 10^4$, \verb|NEP-mhd416a| with $\kappa_A=2.5\times 10^3$ and \verb|NEP-tub100| with $\kappa_A=1.9\times 10^3$), while BDCA and DCA successfully solved all test problems. 
	\end{itemize}
	
	\subsection{Numerical results for \eqref{eq:sqeicp}}\label{subsec:resultsforsqeicp}
	\begin{table}[tbhp]
		\caption{Solutions of \eqref{eq:sqeicp} by DCA, BDCA, FMINCON, KNITRO and FILTERSD to the \eqref{prob:seicp_log} model of the $\SEiCP{G,D}$ formulation on the RANDQEICP dataset.}
		\label{tab:performcomp_sqeicp_LnP}
		\centering
		\resizebox{\columnwidth}{!}{
			\begin{tabular}{l|cccc|cccc|ccc|ccc|ccc} \toprule
				\multirow{2}{*}{Prob} & \multicolumn{4}{c|}{DCA} & \multicolumn{4}{c|}{BDCA} & \multicolumn{3}{c|}{FMINCON} & \multicolumn{3}{c|}{KNITRO} & \multicolumn{3}{c}{FILTERSD} \\
				& $\lambda$ & CPU & IT & c & $\lambda$ & CPU & IT & c & $\lambda$ & CPU & c & $\lambda$ & CPU & c & $\lambda$ & CPU & c \\
				\midrule
				RANDQEICP($5\%,50$) & $-1.3091$ & $0.116$ & $3756$ & $8$ & $-1.3091$ & $0.027$ & $365$ & $6$ & $-1.3092$ & $0.275$ & $3$ & $-1.3091$ & $0.036$ & $5$ & $-1.3091$ & $0.026$ & $3$\\
				RANDQEICP($5\%,100$) & $-1.5469$ & $0.137$ & $8334$ & $5$ & $-1.5469$ & $0.026$ & $388$ & $5$ & $-1.5471$ & $0.874$ & $2$ & $-1.5469$ & $0.304$ & $4$ & $-2.2533$ & $0.008$ & $3$\\
				RANDQEICP($5\%,200$) & $-1.9266$ & $0.376$ & $10000$ & $4$ & $-1.9266$ & $0.108$ & $822$ & $5$ & $-1.9270$ & $3.108$ & $2$ & $-1.9266$ & $0.371$ & $4$ & $-2.1274$ & $0.035$ & $2$\\
				RANDQEICP($5\%,400$) & $-2.0098$ & $1.180$ & $10000$ & $3$ & $-2.0098$ & $0.247$ & $1096$ & $5$ & $-2.0107$ & $27.752$ & $1$ & $-2.0098$ & $3.161$ & $3$ & $-2.0098$ & $0.429$ & $3$\\
				RANDQEICP($5\%,600$) & $-1.7119$ & $1.325$ & $9173$ & $5$ & $-1.7119$ & $0.273$ & $830$ & $5$ & $-1.7134$ & $81.489$ & $1$ & $-1.7119$ & $10.806$ & $2$ & $-1.7119$ & $0.979$ & $3$\\
				\midrule
				RANDQEICP($10\%,50$) & $-2.4761$ & $0.184$ & $10000$ & $3$ & $-2.4761$ & $0.209$ & $4475$ & $6$ & $-2.4762$ & $0.190$ & $2$ & $-2.4761$ & $0.038$ & $4$ & $-2.4821$ & $0.016$ & $5$\\
				RANDQEICP($10\%,100$) & $-1.3290$ & $0.158$ & $8088$ & $6$ & $-1.4838$ & $0.013$ & $169$ & $6$ & $-1.3291$ & $0.629$ & $3$ & $-1.3290$ & $0.094$ & $5$ & $-1.4838$ & $0.008$ & $3$\\
				RANDQEICP($10\%,200$) & $-1.6070$ & $0.251$ & $7830$ & $5$ & $-1.6070$ & $0.063$ & $632$ & $5$ & $-1.6074$ & $3.316$ & $2$ & $-1.6070$ & $0.387$ & $3$ & $-1.6071$ & $0.014$ & $2$\\
				RANDQEICP($10\%,400$) & $-1.5442$ & $0.422$ & $3570$ & $5$ & $-1.5442$ & $0.117$ & $518$ & $5$ & $-1.5451$ & $18.145$ & $1$ & $-1.5442$ & $3.243$ & $3$ & $-1.7060$ & $0.182$ & $2$\\
				RANDQEICP($10\%,600$) & $-2.0771$ & $0.487$ & $2955$ & $4$ & $-2.0771$ & $0.219$ & $590$ & $5$ & $-2.0785$ & $66.369$ & $1$ & $-2.0771$ & $13.738$ & $2$ & $-2.0771$ & $0.842$ & $3$\\
				\midrule
				RANDQEICP($50\%,50$) & $-1.0452$ & $0.037$ & $2346$ & $6$ & $-1.0452$ & $0.004$ & $45$ & $6$ & $-1.0453$ & $0.199$ & $3$ & $-1.0452$ & $0.033$ & $5$ & $-1.8949$ & $0.005$ & $3$\\
				RANDQEICP($50\%,100$) & $-1.0273$ & $0.131$ & $7520$ & $6$ & $-1.0273$ & $0.019$ & $277$ & $7$ & $-1.0275$ & $0.491$ & $2$ & $-1.0273$ & $0.060$ & $4$ & $-1.0273$ & $0.005$ & $3$\\
				RANDQEICP($50\%,200$) & $-1.6939$ & $0.257$ & $8143$ & $5$ & $-1.6939$ & $0.043$ & $402$ & $5$ & $-1.6943$ & $2.855$ & $1$ & $-1.6939$ & $0.487$ & $3$ & $-2.8739$ & $0.037$ & $3$\\
				RANDQEICP($50\%,400$) & $-1.7567$ & $0.496$ & $3594$ & $5$ & $-1.7567$ & $0.159$ & $475$ & $5$ & $-1.7575$ & $20.022$ & $1$ & $-1.7567$ & $3.639$ & $3$ & $-1.7567$ & $0.303$ & $3$\\
				RANDQEICP($50\%,600$) & $-2.3685$ & $0.886$ & $6102$ & $5$ & $-2.3685$ & $0.208$ & $640$ & $5$ & $-2.3700$ & $53.110$ & $1$ & $-2.3685$ & $12.892$ & $3$ & $-2.3685$ & $0.864$ & $3$\\
				\midrule
				RANDQEICP($70\%,50$) & $-1.6623$ & $0.079$ & $5021$ & $7$ & $-1.6623$ & $0.022$ & $429$ & $6$ & $-1.6623$ & $0.209$ & $3$ & $-1.6623$ & $0.035$ & $5$ & $-1.9925$ & $0.003$ & $3$\\
				RANDQEICP($70\%,100$) & $-1.0026$ & $0.132$ & $7481$ & $6$ & $-1.0026$ & $0.005$ & $40$ & $6$ & $-1.0027$ & $0.461$ & $2$ & $-1.0026$ & $0.250$ & $4$ & $-2.1216$ & $0.006$ & $3$\\
				RANDQEICP($70\%,200$) & $-1.2494$ & $0.279$ & $8660$ & $5$ & $-1.2494$ & $0.046$ & $431$ & $5$ & $-1.2499$ & $2.867$ & $2$ & $-1.2494$ & $0.410$ & $4$ & $-2.9749$ & $0.011$ & $2$\\
				RANDQEICP($70\%,400$) & $-1.4945$ & $0.992$ & $10000$ & $3$ & $-1.4945$ & $0.169$ & $860$ & $4$ & $-1.4954$ & $13.126$ & $1$ & $-1.4945$ & $1.962$ & $2$ & $-1.4945$ & $0.232$ & $2$\\
				RANDQEICP($70\%,600$) & $-2.2780$ & $1.654$ & $10000$ & $4$ & $-2.2780$ & $0.449$ & $1265$ & $5$ & $-2.2792$ & $53.652$ & $1$ & $-2.2778$ & $10.440$ & $2$ & $-2.4112$ & $0.760$ & $3$\\
				\midrule
				RANDQEICP($90\%,50$) & $-1.9971$ & $0.102$ & $6365$ & $10$ & $-1.9971$ & $0.024$ & $451$ & $10$ & $-1.9971$ & $0.200$ & $3$ & $-1.9971$ & $0.044$ & $6$ & $-2.1125$ & $0.003$ & $3$\\
				RANDQEICP($90\%,100$) & $-1.7783$ & $0.093$ & $3841$ & $6$ & $-1.7783$ & $0.007$ & $84$ & $6$ & $-1.7784$ & $0.606$ & $2$ & $-1.7783$ & $0.091$ & $5$ & $-2.8556$ & $0.015$ & $3$\\
				RANDQEICP($90\%,200$) & $-1.8497$ & $0.356$ & $10000$ & $3$ & $-1.8497$ & $0.170$ & $2286$ & $5$ & $-1.8557$ & $2.934$ & $1$ & $-1.8497$ & $0.418$ & $3$ & $-3.2654$ & $0.010$ & $3$\\
				RANDQEICP($90\%,400$) & $-1.8730$ & $0.426$ & $3295$ & $5$ & $-1.8730$ & $0.111$ & $401$ & $6$ & $-1.8739$ & $19.103$ & $1$ & $-1.8730$ & $5.609$ & $3$ & $-2.2368$ & $0.352$ & $3$\\
				RANDQEICP($90\%,600$) & $-1.9274$ & $0.567$ & $2986$ & $5$ & $-1.9274$ & $0.169$ & $426$ & $5$ & $-1.9347$ & $59.225$ & $1$ & $-1.9274$ & $12.907$ & $3$ & $-1.9274$ & $0.700$ & $2$\\
				\midrule
				avg & & $0.445$ & $6762$ & $5$ & & $0.116$ & $736$ & $6$ & & $17.248$ & $2$ & & $3.258$ & $4$ & & $0.234$ & $3$\\
				\bottomrule
		\end{tabular}}	
	\end{table}

	\begin{table}[tbhp]
		\caption{Solutions of \eqref{eq:sqeicp} by DCA, BDCA, FMINCON, KNITRO and FILTERSD to the \eqref{prob:seicp_qcqp} model of the $\SEiCP{G,D}$ formulation on the RANDQEICP dataset.}
		\label{tab:performcomp_sqeicp_QP}
		\centering
		\resizebox{\columnwidth}{!}{
			\begin{tabular}{l|cccc|cccc|ccc|ccc|ccc} \toprule
				\multirow{2}{*}{Prob} & \multicolumn{4}{c|}{DCA} & \multicolumn{4}{c|}{BDCA} & \multicolumn{3}{c|}{FMINCON} & \multicolumn{3}{c|}{KNITRO} & \multicolumn{3}{c}{FILTERSD} \\
				& $\lambda$ & CPU & IT & c & $\lambda$ & CPU & IT & c & $\lambda$ & CPU & c & $\lambda$ & CPU & c & $\lambda$ & CPU & c \\
				\midrule
				RANDQEICP($5\%,50$) & $-1.3091$ & $0.234$ & $99$ & $5$ & $-1.3091$ & $0.278$ & $118$ & $6$ & $-1.3092$ & $0.433$ & $2$ & $-1.3091$ & $0.066$ & $4$ & $-1.3091$ & $0.008$ & $3$\\
				RANDQEICP($5\%,100$) & $-1.5469$ & $0.357$ & $111$ & $3$ & $-1.5469$ & $0.605$ & $177$ & $3$ & $-1.5471$ & $0.695$ & $1$ & $-1.5469$ & $0.101$ & $3$ & $-1.5469$ & $0.016$ & $3$\\
				RANDQEICP($5\%,200$) & $-1.9266$ & $3.353$ & $452$ & $3$ & $-1.9266$ & $2.870$ & $383$ & $3$ & $-1.9267$ & $5.073$ & $1$ & $-1.9266$ & $0.834$ & $3$ & $-1.9266$ & $0.050$ & $3$\\
				RANDQEICP($5\%,400$) & $-2.0098$ & $6.729$ & $447$ & $2$ & $-2.0098$ & $6.356$ & $419$ & $2$ & $-2.0099$ & $36.612$ & $1$ & $-2.0098$ & $8.246$ & $2$ & $-2.0098$ & $0.214$ & $5$\\
				RANDQEICP($5\%,600$) & $-1.7119$ & $7.411$ & $241$ & $2$ & $-1.7119$ & $6.318$ & $204$ & $2$ & $-1.7126$ & $103.580$ & $0$ & $-1.7120$ & $42.192$ & $0$ & $-1.7119$ & $0.501$ & $4$\\
				\midrule
				RANDQEICP($10\%,50$) & $-2.4761$ & $14.695$ & $6694$ & $3$ & $-2.4761$ & $10.461$ & $4702$ & $3$ & $-2.4761$ & $0.276$ & $2$ & $-2.4761$ & $0.141$ & $4$ & $-2.4821$ & $0.006$ & $4$\\
				RANDQEICP($10\%,100$) & $-1.3290$ & $1.198$ & $379$ & $5$ & $-1.3290$ & $0.881$ & $282$ & $5$ & $-1.3291$ & $0.741$ & $2$ & $-1.3290$ & $0.100$ & $4$ & $-1.3290$ & $0.012$ & $8$\\
				RANDQEICP($10\%,200$) & $-1.5970$ & $6.778$ & $904$ & $2$ & $-1.5970$ & $7.012$ & $894$ & $2$ & $-1.5970$ & $6.833$ & $1$ & $-1.5970$ & $1.348$ & $2$ & $-1.5970$ & $0.077$ & $5$\\
				RANDQEICP($10\%,400$) & $-1.5442$ & $6.556$ & $422$ & $2$ & $-1.5442$ & $5.848$ & $378$ & $2$ & $-1.5443$ & $38.751$ & $1$ & $-1.5442$ & $9.685$ & $2$ & $-1.5442$ & $0.234$ & $5$\\
				RANDQEICP($10\%,600$) & $-1.8325$ & $8.441$ & $269$ & $2$ & $-1.8325$ & $9.502$ & $304$ & $2$ & $-1.8332$ & $114.343$ & $0$ & $-1.8325$ & $44.799$ & $1$ & $-1.8325$ & $0.492$ & $5$\\
				\midrule
				RANDQEICP($50\%,50$) & $-1.0452$ & $0.155$ & $74$ & $3$ & $-1.0452$ & $0.257$ & $119$ & $4$ & $-1.0453$ & $0.197$ & $2$ & $-1.0452$ & $0.036$ & $4$ & $-1.0452$ & $0.006$ & $3$\\
				RANDQEICP($50\%,100$) & $-1.0273$ & $0.348$ & $100$ & $3$ & $-1.0273$ & $0.380$ & $108$ & $3$ & $-1.0274$ & $0.590$ & $2$ & $-1.0273$ & $0.092$ & $4$ & $-1.0273$ & $0.014$ & $3$\\
				RANDQEICP($50\%,200$) & $-1.6939$ & $1.580$ & $207$ & $2$ & $-1.6939$ & $1.464$ & $184$ & $2$ & $-1.6942$ & $4.355$ & $1$ & $-1.6939$ & $0.787$ & $3$ & $-1.6939$ & $0.071$ & $5$\\
				RANDQEICP($50\%,400$) & $-1.7567$ & $5.555$ & $359$ & $2$ & $-1.7567$ & $4.866$ & $312$ & $2$ & $-1.7568$ & $42.229$ & $1$ & $-1.7567$ & $12.943$ & $2$ & $-1.7567$ & $0.251$ & $5$\\
				RANDQEICP($50\%,600$) & $-2.3685$ & $26.046$ & $847$ & $2$ & $-2.3685$ & $20.813$ & $673$ & $2$ & $-2.3686$ & $124.180$ & $0$ & $-2.3685$ & $81.686$ & $1$ & $-2.3685$ & $0.502$ & $3$\\
				\midrule
				RANDQEICP($70\%,50$) & $-1.6623$ & $0.364$ & $172$ & $5$ & $-1.6623$ & $0.357$ & $161$ & $5$ & $-1.6623$ & $0.214$ & $3$ & $-1.6623$ & $0.049$ & $4$ & $-1.6623$ & $0.006$ & $3$\\
				RANDQEICP($70\%,100$) & $-1.0026$ & $0.285$ & $90$ & $4$ & $-1.0026$ & $0.327$ & $97$ & $4$ & $-1.0027$ & $0.604$ & $2$ & $-1.0026$ & $0.132$ & $4$ & $-1.0026$ & $0.013$ & $4$\\
				RANDQEICP($70\%,200$) & $-1.2494$ & $1.056$ & $148$ & $3$ & $-1.2494$ & $1.087$ & $147$ & $3$ & $-1.2497$ & $3.969$ & $1$ & $-1.2494$ & $0.538$ & $2$ & $-1.2494$ & $0.054$ & $5$\\
				RANDQEICP($70\%,400$) & $-1.4945$ & $1.469$ & $98$ & $2$ & $-1.4945$ & $1.346$ & $88$ & $2$ & $-1.4945$ & $29.390$ & $1$ & $-1.4945$ & $6.711$ & $2$ & $-1.4945$ & $0.192$ & $4$\\
				RANDQEICP($70\%,600$) & $-2.2778$ & $27.557$ & $913$ & $2$ & $-2.2778$ & $27.701$ & $910$ & $2$ & $-2.2780$ & $121.946$ & $1$ & $-2.2780$ & $68.905$ & $1$ & $-2.4112$ & $0.552$ & $5$\\
				\midrule
				RANDQEICP($90\%,50$) & $-1.9971$ & $1.064$ & $493$ & $5$ & $-1.9971$ & $0.966$ & $439$ & $5$ & $-1.9971$ & $0.204$ & $3$ & $-1.9971$ & $0.065$ & $4$ & $-1.9971$ & $0.011$ & $4$\\
				RANDQEICP($90\%,100$) & $-1.7783$ & $1.604$ & $482$ & $3$ & $-1.7783$ & $1.243$ & $378$ & $3$ & $-1.7784$ & $0.707$ & $2$ & $-1.7783$ & $0.112$ & $4$ & $-1.7783$ & $0.017$ & $6$\\
				RANDQEICP($90\%,200$) & $-1.8497$ & $45.023$ & $6126$ & $2$ & $-1.8497$ & $27.906$ & $3775$ & $2$ & $-1.8499$ & $5.604$ & $1$ & $-1.8497$ & $0.808$ & $3$ & $-1.8497$ & $0.054$ & $5$\\
				RANDQEICP($90\%,400$) & $-1.8730$ & $2.615$ & $173$ & $2$ & $-1.8730$ & $2.545$ & $170$ & $2$ & $-1.8731$ & $41.465$ & $1$ & $-1.8730$ & $12.281$ & $2$ & $-1.8730$ & $0.208$ & $5$\\
				RANDQEICP($90\%,600$) & $-1.9274$ & $8.256$ & $270$ & $2$ & $-1.9274$ & $8.751$ & $285$ & $2$ & $-1.9275$ & $127.852$ & $0$ & $-1.9274$ & $97.187$ & $1$ & $-1.9274$ & $0.515$ & $4$\\
				\midrule
				avg & & $7.149$ & $823$ & $3$ & & $6.006$ & $628$ & $3$ & & $32.434$ & $1$ & & $15.594$ & $3$ & & $0.163$ & $4$\\
				\bottomrule
		\end{tabular}}	
	\end{table}

	The numerical results in \Cref{tab:performcomp_sqeicp_LnP,tab:performcomp_sqeicp_QP} for \eqref{prob:seicp_log} and \eqref{prob:seicp_qcqp} models to $\SEiCP{G,D}$ formulation on RANDQEICP dataset lead to similar observations as in \Cref{subsec:resultsforseicp} for \eqref{eq:seicp}. The negative value in $\lambda$ is because we subtract $\mu_{GD}$ from the computed $\lambda$ for $\SEiCP{G+\mu_{GD} D,D}$ according to \Cref{thm:1} to get $\lambda$ for $\SEiCP{G,D}$. The best average result is always obtained by BDCA for \eqref{prob:seicp_log} model with average CPU time $0.116$ seconds and with best average precision $c=6$, whereas the worst average result is always given by FMINCON in terms of the average CPU time and average precision for both \eqref{prob:seicp_log} and \eqref{prob:seicp_qcqp} models. BDCA outperformed DCA with better precision in numerical results and with about $89\%$ (resp. $24\%$) reduction in the average number of iterations and about $74\%$ (resp. $16\%$) reduction in average CPU time for solving \eqref{prob:seicp_log} (resp. \eqref{prob:seicp_qcqp}) model. Hence, BDCA yields better acceleration to the \eqref{prob:seicp_log} model than the \eqref{prob:seicp_qcqp} model. 
	
	We can conclude that BDCA significantly speeds up the convergence of DCA to get better numerical results, and often outperforms other compared solvers. Hence, BDCA should be a promising approach for solving \eqref{eq:seicp} and \eqref{eq:sqeicp}, especially for large-scale cases.  
		
	\section{Conclusions}
	\label{sec:conclusions}
	
	In this paper, we presented two DC programming formulations and the corresponding accelerated DC algorithms (BDCA) for solving \eqref{eq:seicp} and \eqref{eq:sqeicp}. Numerical simulations of BDCA and DCA against KNITRO, FILTERSD and MATLAB FMINCON solvers, and tested on both synthetic datasets and Matrix Market NEP Repository for \eqref{eq:seicp} and \eqref{eq:sqeicp}, demonstrated that BDCA accelerates dramatically the convergence of DCA to get better numerical solutions, and often outperforms the compared solvers (KNITRO, FILTERSD and FMINCON) in terms of the average CPU time and average solution precision. BDCA is a promising approach for solving both \eqref{eq:seicp} and \eqref{eq:sqeicp}, especially for large-scale cases.

	There are several questions that deserve attention in the future: $(i)$ Apply BDCA to solve asymmetric EiCP (AEiCP) and asymmetric QEiCP (AQEiCP). As opposed to the symmetric cases, the formulations \eqref{prob:seicp_qcqp} and \eqref{prob:seicp_log} are no longer equivalent to AEiCP anymore. We have to consider some nonlinear programming formulations (NLP) such as those proposed in \cite{Niu12,niu2019improved}, and investigate the corresponding BDCA. How to efficiently solve the convex subproblems and how to proceed inexpensive exact line search will be two important questions to study. $(ii)$ Propose a better solution approach for the convex subproblem \eqref{prob:QPk} without using any external solver. The problem \eqref{prob:QPk} has a very special structure with only one convex quadratic constraint and nonnegative orthant by minimizing a linear objective function, so we believe that by ingeniously exploiting the unique structure, it could be solved either explicitly or more efficiently than invoking external solvers. $(iii)$ Estimate smaller $\bar{\mu}$ and $L_g$ for the \eqref{prob:seicp_log} model. As observed in our numerical tests, the estimations in \Cref{lem:strongconvexityofgandh} and \Cref{cor:smoothnessofgandh} are highly overestimated. Smaller parameters performed much better in practice. A possible idea is to develop an efficient adaptive procedure for $\mu$ (perhaps similar to the one proposed for $L_i$ in FISTA), which does not aim to ensure a global convexity of $g$ and $h$ over $\Omega$, but to guarantee a local convexity of $g$ around some convex neighborhoods of the current iterate $x^k$ containing the next iterate $x^{k+1}$, leading to better local convex subproblems of the DC program than the global ones leveraged in this paper. We may call this new algorithm as \emph{Quasi-DCA}, whose convergence analysis, accelerated variants and numerical performance in various challenging applications deserve more attention in the future.  
	
	\section*{Acknowledgments}
	This work was funded by the Natural Science Foundation of China (Grant No: 11601327). Special thanks to Professor Joaquim J. Judice for his kind encouragement and stimulating discussions on several aspects of this paper. 
	
	\bibliographystyle{siamplain}
	\bibliography{references}
\end{document}